\newtheorem{thm}{Theorem}[section]
\newtheorem{prp}{Proposition}[section]
\newtheorem{lem}{Lemma}[section]
\newtheorem{ex}{Example}[section]
\newtheorem{rem}{Remark}[section]
\newcommand{\R}{\mathbb{R}}
\newcommand{\Z}{\mathbb{Z}}
\newcommand{\mE}{\mathbb{E}}
\newcommand{\mP}{\mathbb{P}}
\newcommand{\bbL}{\mathbb{L}}
\newcommand{\mT}{\mathcal{T}}
\newcommand{\mB}{\mathcal B}
\newcommand{\wt}{\widetilde}
\newcommand{\V}{\text{Var}}
\title{Covering monotonicity of the limit shapes of \\first passage percolation on crystal lattices}
\author{Tatsuya Mikami\thanks{Department of Mathematics, Kyoto University, E-mail:mikami.tatsuya.68z@st.kyoto-u.ac.jp ）}}
\date{}
\begin{document}
\maketitle

\abstract{
This paper studies the first passage percolation (FPP) model: each edge in the cubic lattice $\bbL^d$ is assigned a random passage time, and consideration is given to the behavior of the percolation region $B(t)$, which consists of those vertices that can be reached from the origin within a time $t > 0$. Cox and Durrett showed the shape theorem for the percolation region, saying that the normalized region $B(t)/t$ converges to some limit shape $\mathcal{B}$. This paper introduces a general FPP model defined on crystal lattices, and shows the monotonicity of the limit shapes under covering maps, thereby providing insight into the limit shape of the cubic FPP model. }

\section{Introduction}
\label{sec:introduction}

\subsection{Background}
\label{subsec:background}
Percolation theory is a branch of probability theory that describes the behavior of clusters of randomly obtained objects. One of the most famous percolation models is the \emph{bond percolation model}, in which each edge (bond) of an infinite, connected, and locally finite graph $X = (V, E)$ is assumed to be open with the same probability $p \in [0, 1]$, independently of all other edges. This model has been of great interest regarding the \emph{critical probability} $p_c(X) \in [0, 1]$, which is the value such that all clusters (connected components) are finite when $p < p_c(X)$ and there exists an infinite cluster when $p>p_c(X)$. This model has been mostly studied in the $d$-dimensional cubic lattice $\bbL^d = (\Z^d, \mE^d)$, where $\Z^d$ is the set of all $d$-tuples $x = (x_1, \ldots, x_d)$ of integers $x_i$, and the edge set $\mE^d$ is the set of all unordered pairs $\{x, y\}$ of $\Z^d$ with $\|x - y\|_1 = 1$ (here, $\|\cdot\|_p$ represents the $L_p$-norm on $\R^d$). \par
The paper \cite{beyond_Z^d} proposes a comprehensive study of percolation in a more general context than $\bbL^d$, and raises the ``covering'' problem formulated as follows. Let $G \curvearrowright X$ be a free group action on a graph $X$. The quotient graph $X_1:= X/G$ is defined as the graph whose vertices are $G$-orbits, and an edge $\{Gx, Gy\}$ appears in $X_1$ if there are representatives $x_0 \in Gx$, $y_0 \in Gy$ that are neighbors in $X$. In this setting, the comparison
\begin{equation}
\label{eq:monotonicity_critical}
	p_c(X) \leq p_c(X_1)
\end{equation}
of two critical probabilities holds, and the strictness of \eqref{eq:monotonicity_critical} is given by~\cite{Strict_monotonicity} under some assumptions. This result implies that the cluster in a graph tends to be larger than that in its quotient graph. These studies include the study in \cite{Campanino}, which shows that the critical probability $p_c(\mathbb{L}^3)$ is strictly less than that $p_c(\mathbb{T})$ of the triangular lattice $\mathbb{T}$.
\par
The aim of this paper is to show an analogue of \eqref{eq:monotonicity_critical} for the \emph{first passage percolation} (FPP) model. The FPP model, which was introduced in 1965 by Hammersley and Welsh~\cite{Hammersley}, is a time evolution version of the bond percolation model: each edge $e \in \mE^d$ of the $d$-dimensional cubic lattice is independently assigned a random nonnegative time $t_e \geq 0$ according to a fixed distribution $\nu$. The passage time $T(\gamma)$ of a path $\gamma = (e_1,\ldots, e_r)$ is defined as the sum $T(\gamma) := \sum_{i=1}^r t_{e_i}$.   
For two points $x$, $y \in \R^d$, we denote by $T(x, y)$ the first passage time from $x$ to $y$, that is,
\begin{equation*}
	T(x, y) := \inf\{T(\gamma) \,:\, \gamma \text{ is a path from $x^{\prime}$ to $y^{\prime}$} \},
\end{equation*}
where $x^{\prime}$, $y^{\prime} \in \Z^d$ are the closest lattice points of $x$ and $y$, respectively. The percolation region $B(t)$ is defined as
\begin{equation*}
	B(t) := \{x \in \R^d \,:\, T(0, x)\leq t\}
\end{equation*}
for a time $t > 0$. Cox and Durrett~\cite{Cox_Durrett} showed the \emph{shape theorem}, a ``law of large numbers'' for the percolation region: if the time distribution $\nu$ satisfies the moment condition 
\begin{equation}
\label{eq:a_moment_condition}
	\mE\min\{t_1,\ldots, t_{2d} \}^{d} < \infty, 
\end{equation}
where random variables $t_1, \ldots, t_{2d} \sim \nu$ are independent copies of random times, then the normalized region $B(t)/t$ converges to some limit shape $\mB$, which may coincide with the whole space $\R^d$, as $t \rightarrow \infty$. 

\subsection{Main result}
This paper presents an analogue of \eqref{eq:monotonicity_critical} for the limit shape $\mB$ in the framework of a crystal lattice-model. First, we consider a general FPP model defined on \emph{crystal lattices}, using the formulation of discrete geometric analysis that was first introduced by Kotani and Sunada~\cite{Standard_Realizations}. Here, a $d$-dimensional crystal lattice is a regular covering graph $X$ over a finite graph $X_0$ whose covering transformation group $L$ is a free abelian group with rank $d$. We consider its ``shape''  as a periodic realization $(\Phi, \rho)$, where $\Phi:X\rightarrow \R^d$ is a map into the Euclidian space $\R^d$, and the homomorphism $\rho: L \rightarrow \R^d$ represents the period of the realization (see Sect.~\ref{subsec:crystal_lattices}). In this setting, we first give the relation between the limit shape $\mB$ and a realization of a crystal lattice, and we give the symmetric properties of $\mB$ derived from those of the realized crystal $\Phi(X)$. \par
The main result in this paper is formulated as follows. Let $\Phi:X \rightarrow \R^d$ be a periodic realization of a $d$-dimensional crystal lattice $X$. By identifying some $d_1$-dimensional subspace of $\R^d$ with $\R^{d_1}$, we shall observe that the orthogonal projection $P(\Phi(X))$ onto $\R^{d_1}$ coincides with the image of a periodic realization $\Phi_1: X_1 \rightarrow \R^{d_1}$ of a $d_1$-dimensional crystal lattice $X_1$. We also see that $X_1$ can be written by $X/G$ for some group action $G \curvearrowright X$, and that the following commutative property holds:
\begin{equation}
\label{diagram:projection}
\xymatrix{
X\ar[r]^-{\Phi}\ar[d]_-{\omega}\ar@{}[rd]|{\circlearrowright}& \R^d\ar[d]^-{P}\\
X_1\ar[r]_-{\Phi_1}&\R^{d_1},  
}
\end{equation}
where $\omega: X \rightarrow X_1$ is the quotient map and $P: \R^d \rightarrow \R^{d_1}$ is the orthogonal projection (see~\cite[Sect. 7.2]{Topological_Crystallography}). In this setting, we consider the independent FPP models on $X$ and $X_1$ with the same distribution $\nu$ satisfying an analogue of the moment condition \eqref{eq:a_moment_condition} for $X$ and $X_1$. We show that the limit shape $\mB_1$ of $X_1$ is included in the projection $P(\mB)$ of the limit shape $\mB$ of $X$. 
\begin{thm}
\rm
\label{thm:covering_monotonicity}
Let $\Phi:X \rightarrow \R^d$ and $\Phi_1:X_1 \rightarrow \R^{d_1}$ be periodic realizations of crystal lattices $X$, $X_1$ satisfying the projective relation \eqref{diagram:projection}. Then, for the limit shapes $\mB$, $\mB_1$ of $X$, $X_1$, the following holds: 
\begin{align*}
	\mB_1 \subset P(\mB).
\end{align*}
\end{thm}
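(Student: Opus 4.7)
The plan is to establish the inclusion via an auxiliary coupled FPP on $X$. First, by the shape theorem, both limit shapes are unit balls of deterministic seminorms $\mu^X$ and $\mu^{X_1}$ on $\R^d$ and $\R^{d_1}$, and $\mB_1\subset P(\mB)$ is equivalent to $\inf_{\xi\in P^{-1}(\xi_1)}\mu^X(\xi)\leq \mu^{X_1}(\xi_1)$ for every $\xi_1\in\R^{d_1}$. Since both sides are deterministic and depend only on the distribution $\nu$, one is free to replace the independence assumption between the two FPPs by any joint coupling with the prescribed marginals.

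I would then introduce a ``lifted'' FPP on $X$ whose edge weights are defined by $\widehat{t}_e:=t^{X_1}_{\omega(e)}$. These weights are constant on each $\omega$-fibre, hence not independent across edges of $X$, but they are periodic and ergodic under the $L$-action, so a suitable extension of the shape theorem yields a deterministic limit shape $\widehat{\mB}\subset\R^d$ with associated seminorm $\widehat{\mu}$. Under this coupling, any $X_1$-path $\gamma_1=(e_1,\dots,e_r)$ from the origin lifts uniquely to an $X$-path $\widetilde{\gamma}_1=(\widetilde{e}_1,\dots,\widetilde{e}_r)$ starting at $0\in X$, and the weight sums coincide: $\widehat{T}(\widetilde{\gamma}_1)=\sum_i t^{X_1}_{\omega(\widetilde{e}_i)}=\sum_i t^{X_1}_{e_i}=T^{X_1}(\gamma_1)$. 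Taking the $X_1$-optimal $\gamma_1$ to a vertex $v_1$ with $\Phi_1(v_1)/t\to\xi_1$ gives a lift endpoint $v\in\omega^{-1}(v_1)$ satisfying $\widehat{T}(0,v)\leq T^{X_1}(0,v_1)$; passing to $t\to\infty$ via the two shape theorems yields $\mB_1\subset P(\widehat{\mB})$.

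It remains to prove the comparison $\widehat{\mB}\subset\mB$, equivalently $\mu^X\leq\widehat{\mu}$ pointwise. Both seminorms arise from FPPs on $X$ with the same univariate edge-weight marginal $\nu$; the difference is that $\mu^X$ uses iid weights whereas $\widehat{\mu}$ uses weights perfectly correlated along each $\omega$-fibre. Because $T(0,v)=\min_\gamma\sum_{e\in\gamma}t_e$ is a concave (min-of-linear) functional of the weight vector, independent weights yield smaller expected passage times than positively correlated ones with the same marginals; a standard FKG-type inequality or an explicit monotone rearrangement coupling along each $\omega$-fibre makes this rigorous, and passage to the subadditive limit gives $\mu^X\leq\widehat{\mu}$, hence $\widehat{\mB}\subset\mB$. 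Combining the two inclusions produces $\mB_1\subset P(\widehat{\mB})\subset P(\mB)$. The main obstacle I foresee is this final stochastic comparison: since the two models have identical univariate marginals and differ only in their correlation structure, the comparison must be made carefully through the joint distribution, and the moment assumption on $\nu$ enters there to guarantee finite expected passage times and the validity of the subadditive ergodic theorem for the non-iid auxiliary environment.
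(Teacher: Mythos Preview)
Your decomposition into $\mB_1\subset P(\widehat{\mB})$ and $\widehat{\mB}\subset\mB$ breaks at the second inclusion: the pointwise inequality $\mu^X\le\widehat{\mu}$ is false in general. Take $X=\bbL^2$, let $P$ be the projection onto the first coordinate, and let $\nu$ be any continuous distribution with $\nu(\{0\})=0<p_c(\bbL^2)$. In the pulled-back environment every vertical edge in column $k$ carries the same weight $V_k$, so for $e_2=(0,1)\in\ker P$ one has
\[
\widehat{T}(0,ne_2)\ \le\ 2\sum_{j=0}^{k-1}H_j+nV_k\qquad\text{for every }k\ge 0,
\]
and dividing by $n$ yields $\widehat{\mu}(e_2)\le\inf_k V_k=\operatorname{ess\,inf}\nu=0$ almost surely. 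By Proposition~\ref{prp:positivity}, however, $\mu^X(e_2)>0$. Thus $\widehat{\mB}$ is unbounded in the fibre direction while $\mB$ is compact, so $\widehat{\mB}\not\subset\mB$. The correlated environment produces a \emph{smaller} time constant along the fibre because the walker can locate one anomalously cheap column and reuse it $n$ times, an option the i.i.d.\ environment does not offer.

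The FKG heuristic you invoke fails for exactly this reason. The decorrelation step (the paper's Lemma~\ref{lem:corelation_lift}) requires that each path under consideration use each random variable at most once; this is what allows the split of one variable into two independent copies to be absorbed into two disjoint groups of paths. But a self-avoiding path $\gamma$ in $X$ may project to a non-self-avoiding walk in $X_1$, so $\widehat{T}(\gamma)=\sum_{e\in\gamma}t^{X_1}_{\omega(e)}$ can count the same $X_1$-weight many times, and splitting that weight does not reproduce the i.i.d.\ $X$-model. The paper avoids the pulled-back model altogether: Lemma~\ref{lem:probability_lift} compares $T_1(0,x_1)$ directly with the point-to-\emph{fibre} time $\inf_{\tilde x_1\in\omega^{-1}(x_1)}T(0,\tilde x_1)$, applying the decorrelation to self-avoiding paths in $X_1$ and their lifts, which are automatically self-avoiding in $X$ and terminate at different fibre points. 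Integrating and passing to the limit then requires a point-to-affine-subspace lemma (Lemma~\ref{lem:point_to_Affine}) to extract a single $y\in P^{-1}(x_1)$ with $\mu^X(y)\le\mu_1(x_1)$; this is precisely the step your ``passing to $t\to\infty$'' hides, and it cannot be replaced by a pointwise norm comparison.
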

This result gives insights regarding the limit shape of the cubic FPP model. Namely, in order to observe the shape of $\mB$ of the cubic lattice $\bbL^d$, we can consider the projection $P:\R^d \rightarrow \R^{d_1}$ in some suitable direction and obtain a periodic realization of a covered lattice $X_1$. Then we can obtain that $\mB_1 \subset P(\mB)$ for the limit shape $\mB_1$ of $X_1$, implying that the projection of the limit shape $\mB$ of $\mathbb{L}^d$ to the the space $\R^{d_1}$ is bounded below by the limit shape $\mB_1$ of $X_1$. 
\par
We remark on several works related to this formulation. First, the paper~\cite{Multidimensional_lattice} gives a formulation of the ``periodic graph'' by group action on a graph, and it is equivalent to the definition of crystal lattices in this paper (see Sect.~\ref{subsec:covering_graphs}). It follows from~\cite[Theorem~7.2]{Topological_Crystallography} that our formulation of crystal lattices is also essentially equivalent to that in \cite[Sect. 2.1]{Kesten_textbook}, which defines a ``periodic graph'' as a graph imbedded in $\R^d$ in such a way that each coordinate vector of $\R^d$ is a period for the image. The paper \cite{1_dFPP} studies the asymptotic properties of the FPP model on a $1$-dimensional graph, which corresponds to the crystal lattice with dimension $1$ in this paper. We also note that \cite{FPPonTriangular2016, FPPonTriangular2019} study the critical FPP model on the triangular lattice. 
\par
The study of crystal lattices is rich in considerations of the shape of lattices, as exemplified by the concept of the \emph{standard realization}, which is the realization with maximal symmetry among all realizations (see \cite{Topological_Crystallography}). The cubic lattice that are often considered in the percolation model, for example, are included in the framework of the standard realization. Thus, the formulation of a periodic FPP model in this paper is suitable for studying the relationship between the shape of the percolation region and the lattices. 
\par
The remainder of this paper is organized as follows. In Sect.~\ref{sec:Preliminaries}, we review the concept of crystal lattices and basic properties. In Sect.~\ref{sec:setting_and_shape_theorem}, we formulate the FPP model on a general crystal lattice and give a generalized shape theorem (Theorem~\ref{thm:shape_theorem}). In Sect.~\ref{sec:covering_monotonicity}, we give the proof of the main theorem (Theorem~\ref{thm:covering_monotonicity}). 

\section{Preliminaries}
\label{sec:Preliminaries}
\subsection{Covering graphs}
\label{subsec:covering_graphs}
A crystal lattice, which is the main object in this paper, is defined as a covering graph over a finite graph. First, in this subsection, we review the concept of covering graphs. We refer to \cite{Topological_Crystallography} for more detailed description. \par
A \emph{graph} is an ordered pair $X=(V, E)$ of disjoint sets $V$ and $E$ with two maps $i:E \rightarrow V \times V$, $\iota:E \rightarrow E$ satisfying 
\begin{align*}
	&\iota^2 = I_E \ (\text{the identity map of $E$)}  \text{, and}\\
	&\iota(e) \neq e, \  i(\iota(e)) = \tau(i(e))
\end{align*}
for any $e\in E$, where $\tau: V \times V \rightarrow V \times V$ is the map defined by $\tau(x, y) = (y, x)$. We call $i$ and $\iota$ the \emph{incident map} and the \emph{inversion map} of $X$, respectively. We put $i(e) = (o(e), t(e))$ and call $o(e)$ and $t(e)$ the \emph{origin} and the \emph{terminus}, respectively. $\iota(e)$ is called the \emph{inversion} of $e$ and is sometimes written as $\bar e$. For $x \in V$, we denote by $E_x:= \{e \in E\,:\, o(e) = x\}$. \par
For two graphs $X_1 = (V_1, E_1)$ and $X_2 = (V_2, E_2)$, a \emph{morphism} $f: X_1 \rightarrow X_2$ is a pair $f = (f_V, f_E)$ of two maps $f_V: V_1 \rightarrow V_2$, $f_E: E_1 \rightarrow E_2$ satisfying
\begin{align*}
	&i(f_E(e)) = (f_V(o(e)), f_V(t(e))),\\
    &f_E(\bar e) = \overline{f_E(e)}.
\end{align*}
When both $f_V$ and $f_E$ are bijective, the morphism $f$ is called an \emph{isomorphism}. We abbreviate $f_V$ and $f_E$ by $f$ when there is no confusion. For a graph $X$, we denote by $\text{Aut}(X)$ the automorphism group of $X$. \par
An \emph{action} $G \curvearrowright X$ of a group $G$ on a graph $X$ is a group homomorphism $h: G \rightarrow \text{Aut}(X)$, which naturally gives rise to actions $G\curvearrowright V$ and $G\curvearrowright E$ by $gx:=h(g)(x)$ for $x \in V$ and $ge:=h(g)(e)$ for $e \in E$, respectively. We say $G$ acts on $X$ \emph{freely} when the action $G \curvearrowright V$ is free and $ge \neq \bar e$ for any $g \in G$ and $e \in E$. For a free action $G \curvearrowright X$, we define the \emph{quotient graph} $X/G$ of $X$ as the pair $X/G = (V/G, E/G)$ of the orbit spaces $V/G$ and $E/G$ whose incident and inversion maps are induced from those of $X$. 
\begin{rem}
\rm
	This formulation of quotient graphs is more inclusive than the one we introduced in Sect \ref{subsec:background}, in the sense that it allows graphs with parallel edges and loops.   
\end{rem}
Taking a quotient can be characterized by a \emph{covering map}, namely, a morphism $\omega: X \rightarrow X_0$ from
a connected graph $X = (V, E)$ to $X_0 = (V_0, E_0)$ satisfying 
\begin{itemize}
	\item $\omega_V$ is surjective, and
\item for every $x \in V$, the restriction ${\omega_E}_{\restriction_{E_x}}: E_x \rightarrow E_{0, \omega(x)}$ is bijective.  
\end{itemize}
The graph $X$ is called a \emph{covering graph} of $X_0$. The \emph{covering transformation group} $G(\omega)$ of a covering map $\omega$ is the set of automorphisms $\sigma \in \text{Aut}(X)$ with $\omega \circ \sigma = \omega$.  We say that a covering map $\omega: X \longrightarrow X_0$ is \emph{regular} if for any $x$, $y \in V$ with $\omega(x) = \omega(y)$, there exists a transformation $\sigma \in G(\omega)$ such that $\sigma x = y$. \par
A \emph{path} $\gamma$ in a graph $X$ is a sequence $\gamma = (e_1, e_2, \ldots, e_r)$ of edges with $o(e_{i+1}) = t(e_i)$ for $i = 1,2,\ldots, r-1$. One of the most basic properties of a covering map $\omega: X \rightarrow X_0$ is the \emph{unique path-lifting property}: for any path $\gamma_0 = (e_{0,1}, e_{0,2}, \ldots, e_{0,r})$ in $X_0$ and a vertex $x \in X$ with $\omega(x) = o(e_{0,1})$, there exists a unique path $\gamma = (e_1, e_2, \ldots, e_r)$ in $X$, called a \emph{lifting} of $\gamma_0$, such that $o(e_1) = x$ and $\omega(\gamma) = \gamma_0$. 
\par
We can see that for a regular covering map $\omega: X \rightarrow X_0$, the action $G(\omega) \curvearrowright X$ of the covering transformation group $G(\omega)$ is free and its quotient graph $X/{G(\omega)}$ is isomorphic to $X_0$. On the other hand, the following theorem holds. 
\begin{thm}
\rm
(\cite{Topological_Crystallography}, Theorem~5.2)
\label{thm:alternaticve_defintion}
	Suppose a group $G$ acts freely on a graph $X$. Then the canonical projection $\omega: X \rightarrow X/G$ is a regular covering map whose covering transformation group is $G$. 
\end{thm}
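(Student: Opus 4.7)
The plan is to verify, in turn, the three assertions packed into the statement: (a) $\omega:X\to X/G$ satisfies the two defining conditions of a covering map, (b) this covering map is regular, and (c) its covering transformation group equals the image of $G$ under the action homomorphism $h:G\to\text{Aut}(X)$. Throughout I treat $X$ as connected, which is implicit since covering maps in this paper are defined only from connected graphs.

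For (a), surjectivity of $\omega_V$ on vertices is immediate from the definition of the orbit space. For the local bijectivity $\omega_E|_{E_x}:E_x\to E_{0,\omega(x)}$, I would argue as follows. Given an edge of $X/G$ with origin $Gx$, pick any representative $e_0\in E$ projecting to it; since $o(e_0)\in Gx$, write $o(e_0)=gx$, so that $g^{-1}e_0\in E_x$ projects to the prescribed edge, yielding surjectivity. For injectivity, if $e_1,e_2\in E_x$ lie in the same $G$-orbit, then $e_2=ge_1$ for some $g$, and evaluating origins gives $gx=x$; freeness of the $G$-action on vertices forces $g=1$, hence $e_1=e_2$. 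The auxiliary condition $ge\neq\bar e$ built into the definition of a free action is precisely what is needed for $X/G$ to be a well-defined graph with a consistent inversion. Assertion (b) is then a one-line check: if $\omega(x)=\omega(y)$, then $y=gx$ for some $g$, and $\sigma:=h(g)$ satisfies $\omega\circ\sigma=\omega$ (orbits are preserved) and $\sigma x=y$, so $\sigma\in G(\omega)$.

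The heart of the argument lies in (c). The inclusion $h(G)\subseteq G(\omega)$ is exactly the computation used for (b). For the reverse inclusion, let $\sigma\in G(\omega)$ and fix any basepoint $x_0\in V$. Since $\omega(\sigma x_0)=\omega(x_0)$, there is $g\in G$ with $\sigma x_0=gx_0$; setting $\tau:=h(g)^{-1}\sigma$, we have $\tau\in G(\omega)$ and $\tau x_0=x_0$. For any other vertex $y$, connectedness of $X$ supplies a path $\gamma$ from $x_0$ to $y$; applying $\tau$ produces a second path $\tau(\gamma)$ also starting at $x_0$, and $\omega(\tau(\gamma))=\omega(\gamma)$. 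By the unique path-lifting property of the covering map $\omega$ established in (a), two lifts of the same projected path with the same starting vertex must coincide, so $\tau(\gamma)=\gamma$; in particular $\tau y=y$, and the same argument on edges gives that $\tau$ is the identity morphism. Hence $\sigma=h(g)$. Finally, freeness of the vertex action makes $h$ injective, so the identification $G(\omega)\cong G$ is genuine. The main obstacle I anticipate is this last step: the purely set-theoretic pieces of (a) and (b) amount to unwinding the definitions, but concluding that a deck transformation pinned down at one vertex is the identity truly exploits the covering-map structure, and only once (a) is in hand can the unique path-lifting property be invoked to propagate fixedness from a single basepoint to all of the connected graph $X$.
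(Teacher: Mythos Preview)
The paper does not supply its own proof of this theorem; it is quoted from Sunada's \emph{Topological Crystallography} (Theorem~5.2 there) as a background result and immediately used without argument. Your proof is correct and is the standard one: verify the two covering-map axioms directly from the orbit-space description, read off regularity from transitivity of $G$ on each fiber, and for the identification $G(\omega)=h(G)$ use that a deck transformation fixing one vertex must be the identity by propagating along paths via the unique path-lifting property on the connected graph $X$. Nothing is missing; the only point worth noting is that you correctly flag the logical dependence of step~(c) on step~(a), since unique path-lifting is available only after $\omega$ has been shown to be a covering map.
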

 
\subsection{Crystal lattices}
\label{subsec:crystal_lattices}
A \emph{crystal lattice} $X$ is a regular covering graph over a finite graph $X_0$ whose transformation group $L$ is a free abelian group. The finite graph $X_0$ is called a \emph{base graph} of $X$ and the transformation group $L$ is called an \emph{abstract period lattice}. The \emph{dimension} $\dim X$ of a crystal lattice $X$ is defined to be the rank of $L$. \par
As we remarked in the previous subsection, we have an alternative description of a crystal lattice. Namely, a graph $X$ is a crystal lattice if and only if there exists a free action $L \curvearrowright X$ of a free abelian group $L$, and the quotient graph $X_0 := X/L$ is finite. The following are several examples of crystal lattices. 
\begin{ex}
\rm
	The $d$-dimensional cubic lattice $\mathbb{L}^d = (\Z^d, \mE^d)$, which we introduced in Sect.~\ref{subsec:background}, is a crystal lattice with dimension $d$. Indeed, the free abelian group $\Z^d$ acts naturally on $\mathbb{L}^d$ by translation, and the quotient graph is a bouquet (Fig.~\ref{fig:crystal_lattices}). 
\end{ex}

\begin{ex}
\rm
	The honeycomb lattice $\mathbb{H}$ and the triangular lattice $\mathbb{T}$ are also crystal lattices with dimension $2$. Indeed, we can give the action of $\Z^2$ by translation in such a way that the basis of $\Z^2$ comprises the translations shown in Fig.~\ref{fig:crystal_lattices}. \par
A diamond lattice is also a crystal lattice with dimension $3$, and it can be regarded as a higher-dimensional version of the honeycomb lattice in the sense that the quotient graph consists of two points and four parallel edges connecting them. 
\end{ex}

\begin{figure}[H]
\captionsetup{width=0.85\linewidth}
\centering
\includegraphics[width = 0.6\linewidth]{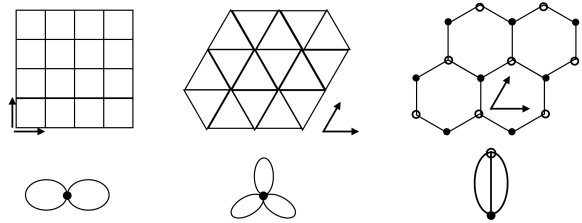}
 \caption{The cubic lattice (left), triangular lattice (center), and honeycomb lattice (right). The arrows indicate a basis of the action on each lattice, and the graphs below are their base graphs.}
\label{fig:crystal_lattices}
\end{figure}
Next, we formulate the ``shape'' of a crystal lattice as a map to the space $\R^d$. Let $X$ be a $d$-dimensional crystal lattice over a finite graph $X_0$, and let $L$ be its abstract period lattice. A \emph{realization} of $X$ into $\R^d$ is a map $\Phi: V \rightarrow \R^d$, where the edges of $X$ are realized as the segments connecting their endpoints. We often write a realization as $\Phi: X \rightarrow \R^d$, which is said to be \emph{periodic} if there exists an injective homomorphism $\rho: L \rightarrow \R^d$ satisfying the following conditions: 
\begin{itemize}
	\item the image $\Gamma := \rho(L)$ is a lattice group of $\R^d$; that is, there exists a basis $(a_1, \ldots, a_d)$ of $\R^d$ such that
\begin{equation*}
	\Gamma = \{\lambda_1 a_1 + \cdots + \lambda_d a_d \,:\, \lambda_i \in \Z\}; \text{ and}
\end{equation*}
\item for any vertex $x \in V$ and $\sigma \in L$,
\begin{align*}
	\Phi(\sigma x) = \Phi(x) + \rho(\sigma). 
\end{align*}
\end{itemize} 
Note that the realized crystal $\Phi(X)$ is invariant under the translation by any vector ${\bf b} \in \Gamma = \rho(L)$. The homomorphism $\rho:L \rightarrow \R^d$ is called the \emph{period homomorphism} of the realization $\Phi:X \rightarrow \R^d$. Though the period $\rho$ is determined uniquely from $\Phi$, we often write a periodic realization as the pair $(\Phi, \rho)$ in order to emphasize that $\rho$ represents the period of the realization.  
\begin{ex}
\rm
\label{ex:realizations}
Figure~\ref{fig:realizations} shows examples of periodic realizations of the honeycomb lattice. Here, the arrows show the basis of the lattice group. Note that the left and center ones have the same lattice group and thus the same period.  
\begin{figure}[H]
\captionsetup{width=0.85\linewidth}
\centering
\includegraphics[width = 10cm]{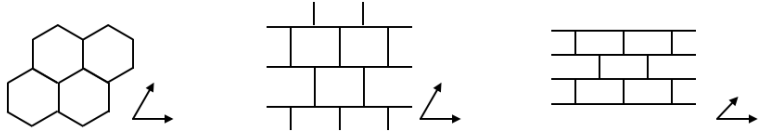}
 \caption{Three examples of periodic realizations of the honeycomb lattice.}
\label{fig:realizations}
\end{figure}	
\end{ex}

For a periodic realization $\Phi:X \rightarrow \R^d$ of a $d$-dimensional crystal lattice $X$ with a period homomorphism $\rho: L \rightarrow \R^d$, an orthogonal projection $P:\R^d \rightarrow \R^{d_1}$ onto some $d_1$-dimensional subspace $\R^{d_1}$ is said to be a \emph{rational projection} if the image $P(\rho(L))$ is a lattice group of $\R^{d_1}$. For this $P$, the quotient graph $X_1:= X/\text{Ker}(P\circ \rho)$ is a $d_1$-dimensional crystal lattice and the commutative diagram \eqref{diagram:projection} holds for some periodic realization $\Phi_1: X_1 \rightarrow \R^{d_1}$. Note that from Theorem~\ref{thm:alternaticve_defintion}, the quotient map $\omega: X \rightarrow X_1$ in \eqref{diagram:projection} is a regular covering map. 

\section{First passage percolation model and shape theorem}
\label{sec:setting_and_shape_theorem}

\subsection{Setting}
\label{subsec:settings}
Let $X = (V, E)$ be a $d$-dimensional crystal lattice over a finite graph $X_0$ whose abstract period lattice is $L$. In Sects~\ref{subsec:settings} and \ref{subsec:asymptotic_speed}, we fix a periodic realization $\Phi: X \rightarrow \R^d$ whose period homomorphism is $\rho: L \rightarrow \R^d$, and we set the lattice group $\Gamma := \rho(L)$. Fix a vertex $0 \in X$ as the ``origin'' of $X$ and suppose $\Phi(0) = 0 \in \R^d$. We also assume that
the periodic realization is \emph{nondegenerate}, that is, the map $\Phi: V \rightarrow \R^d$ is injective. 
Later we will remark that this assumption is actually not essential (Remark~\ref{rem:not_essential}). For the sake of brevity, we write $x \in X$ when $x \in V$, and for a point $\Phi(x) \in \R^d$, we write it by $x \in \R^d$ for short. \par
Throughout this paper, we deal with $X$ as an \emph{undirected graph}, whose edge set is given by the orbit space $E/ \Z_2$ of the action $\Z_2 := \Z/{2\Z} \curvearrowright E$ defined by $e \mapsto \bar e$. Here we simply denote by $E$ the set of orbit space $E/\Z_2$. Now we formulate the FPP model on $X$. Let $\Omega = [0, \infty)^E$ be the configuration space. Fix a \emph{time distribution} $\nu$, which is a probability measure on $[0, \infty)$. We define the probability measure $\mP$ on $\Omega$ as the product measure $\mP = \nu^{\otimes E}$. Let $\mE$ denote the expectation with respect to $\mP$. An element ${\bf t} = (t_e \,:\, e \in E) \in \Omega$ is called a \emph{configuration}. For a path $\gamma = (e_1,\ldots, e_r)$ in $X$, the passage time $T(\gamma)$ is the random variable defined as
\begin{equation*}
	T(\gamma) := \sum_{i=1}^r t_{e_i}.   
\end{equation*}
For two points $x$, $y \in \R^d$, we denote by $T(x, y)$ the first passage time between $x$ and $y$, that is, 
\begin{equation*}
	T(x, y) := \inf\{T(\gamma) \,:\, \gamma \text{ is a path from $x^{\prime}$ to $y^{\prime}$} \},
\end{equation*}
where $x^{\prime}$, $y^{\prime} \in X$ are the closest realized vertices of $x$ and $y$, respectively. The percolation region $B(t)$ is defined as
\begin{equation*}
	B(t) := \{x \in \R^d \,:\, T(0, x)\leq t\}
\end{equation*}
for a time $t > 0$. 
\begin{rem}
\label{rem:distribution_translation}
	We can easily see that $T(x, y)$ and $T(x+ {\bf b}, y + {\bf b})$ have the same distribution for any vector ${\bf b} \in \Gamma$. 
\end{rem}
Define the set of ``rational points''  $\mathcal{D}$ by 
\begin{align*}
	\mathcal{D} = \{q_1a_1+ \cdots q_da_d \,:\, q_i \in \mathbb{Q} \}, 
\end{align*}
where $(a_1, \ldots, a_d)$ is a basis of the lattice group $\Gamma$. Note that $\mathcal D$ coincides with the set of points $x \in \R^d$ such that $Nx \in \Gamma$ for some $N \in \mathbb{N}$, and thus $\mathcal D$ does not depend on the choice of a basis of $\Gamma$.  \par
 The \emph{edge connectivity} of $X$ is the minimum number $l_X \in \mathbb{N}$ such that there exists a set $\{e_1, \ldots, e_{l_X}\}$ of edges that separate $X$. Menger's theorem (see, e.g., \cite{Graph_theory}) gives an alternative description of edge connectivity as follows:  
\begin{align*}
	l_X = \max\{l^{\prime} \in \mathbb{N} \,:\, &\text{ for any two vertices } x \neq y \in X, \text{ there exist } \\
	& l^{\prime} \text{ edge-disjoint paths from $x$ to $y$}\}. 
\end{align*}
From this remark and a basic discussion for the passage time (see, e.g., \cite[Lemma~2.3]{FPP_history}), the following lemma holds. 
\begin{lem}
\label{lem:finite_moment}
Let $t_1, \ldots, t_{l_X}$ be independent copies of $t_e$ and let $k\geq 1$. Then $\mE\min\{t_1, \ldots, t_{l_X}\}^k < \infty$ holds if and only if $\mE T(0, x)^k < \infty$ holds  for all $x \in X$. 
\end{lem}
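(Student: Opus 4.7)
The plan is to treat the two implications separately, using Menger's theorem as stated in the excerpt in two complementary ways: as a source of $l_X$ edge-disjoint paths (to bound $T(0,x)$ from above) and as a source of a minimum edge cut of size $l_X$ (to bound $T(0,x)$ from below).

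For the direction $\mE\min\{t_1,\ldots,t_{l_X}\}^k<\infty \Rightarrow \mE T(0,x)^k<\infty$, I would fix $x\in X$ and choose $l_X$ edge-disjoint paths $\gamma_1,\ldots,\gamma_{l_X}$ from $0$ to $x$ by the Menger description of $l_X$. Setting $n=\max_i|\gamma_i|$ and $S_i=T(\gamma_i)$, one has $T(0,x)\le \min_i S_i$, and the $S_i$ are independent because the paths are edge-disjoint. A union bound gives $\mP(S_i>s)\le n\,\mP(t_e>s/n)$ for each $i$, hence $\mP(\min_i S_i>s)\le n^{l_X}\mP(t_e>s/n)^{l_X}$. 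Writing the $k$-th moment as $\int_0^\infty k s^{k-1}\mP(\min_i S_i>s)\,ds$ and substituting $u=s/n$ then yields $\mE(\min_i S_i)^k\le n^{k+l_X}\,\mE\min(t_1,\ldots,t_{l_X})^k$, which is finite by hypothesis.

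For the converse implication, I would invoke the definition of $l_X$ directly: there exists a cut $\{e_1,\ldots,e_{l_X}\}$ whose removal disconnects $X$ into components, and $0$ lies in one of them, call it $A$. Picking any vertex $x$ in another component $B$, every path from $0$ to $x$ must cross this cut, so $T(0,x)\ge \min\{t_{e_1},\ldots,t_{e_{l_X}}\}$. The edges $e_i$ being distinct, $t_{e_1},\ldots,t_{e_{l_X}}$ are i.i.d.\ copies of $t_e$, and the hypothesis then gives $\mE\min(t_1,\ldots,t_{l_X})^k\le \mE T(0,x)^k<\infty$. The only mildly technical step in the whole argument is the tail estimate and change of variables in the first implication; the lower-bound direction is essentially a tautology from how edge-connectivity is defined, modulo the observation that $0$ always sits on one side of the chosen minimum cut.
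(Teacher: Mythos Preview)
Your proposal is correct and follows exactly the standard argument that the paper alludes to (the paper does not spell out a proof of this lemma but simply invokes Menger's theorem and refers to \cite[Lemma~2.3]{FPP_history}). Both directions---bounding $T(0,x)$ above by $\min_i T(\gamma_i)$ via $l_X$ edge-disjoint paths, and bounding it below by $\min_i t_{e_i}$ via a minimum cut---are the intended ones, and your tail estimate and change of variables are clean and correct.
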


\subsection{Asymptotic speed of first passage time}
\label{subsec:asymptotic_speed}
Suppose the time distribution $\nu$ of each edge satisfies
\begin{equation}
\label{eq:assumption_for_timeconstant}
	\mE \min\{t_1, \ldots, t_{l_X}\} < \infty,  
\end{equation}
where $l_X$ is the edge connectivity of $X$ and $t_1, t_2, \ldots, t_{l_X}$ are independent copies of $t_e$. Similarly to the cubic model, we first prove the following proposition:  
\begin{prp}
\label{prp:time_constant}
For each $x \in \mathcal D$, the limit
\begin{align*}
	\mu(x) := \lim_{n \to \infty} \frac{T(0, nx)}{n}
\end{align*}
exists almost surely. Moreover, the function $\mu:\mathcal{D} \rightarrow \R$ depends on only $X$, $\nu$, and the period $\rho$.
\end{prp}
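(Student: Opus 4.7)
The plan is to invoke the subadditive ergodic theorem along an arithmetic subsequence on which the endpoint lies in the lattice group $\Gamma$, then interpolate to all $n$. Since $x \in \mathcal D$, fix $N \in \mathbb N$ with $y := Nx \in \Gamma$, and let $\sigma \in L$ satisfy $\rho(\sigma) = y$.

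For $0 \leq m \leq n$, set $X_{m,n} := T(my, ny)$. Path concatenation gives the subadditivity $X_{0,n} \leq X_{0,m} + X_{m,n}$, and Remark \ref{rem:distribution_translation}, applied to the lattice translation by $ky$, gives the stationarity $\{X_{m+k, n+k}\}_{m,n} \stackrel{d}{=} \{X_{m,n}\}_{m,n}$. Lemma \ref{lem:finite_moment} combined with assumption \eqref{eq:assumption_for_timeconstant} yields $\mE X_{0,1} = \mE T(0, y) < \infty$, and $X_{m,n} \geq 0$ supplies the required one-sided lower bound. The subadditive ergodic theorem then produces a random variable $\xi$ with $T(0, ny)/n \to \xi$ almost surely. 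Because $\mP$ is a product measure and $\sigma$ acts freely on $E$ with infinite orbits, the induced shift on $\Omega$ is mixing, hence ergodic, so $\xi$ equals a deterministic constant almost surely. Define $\mu(x) := \xi / N$; independence of the choice of $N$ follows by passing to $\mathrm{lcm}(N, N')$ for any alternative $N'$ with $N'x \in \Gamma$.

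To extend convergence to every $n$, write $n = k_n N + r_n$ with $0 \leq r_n < N$ and sandwich via subadditivity:
\[
    T(0, k_n y) - T(nx, (k_n + 1) y) \leq T(0, nx) \leq T(0, k_n y) + T(k_n y, nx).
\]
Since $k_n N / n \to 1$, the outer terms divided by $n$ each tend to $\mu(x)$ by the subsequence result. Each residual has, by Remark \ref{rem:distribution_translation}, a distribution depending only on $r_n \in \{0, 1, \dots, N-1\}$, with finite mean by Lemma \ref{lem:finite_moment}. Partitioning the indices by residue modulo $N$, within each class the residual has a fixed distribution $Z$, and the tail-sum identity $\sum_k \mP(Z > \varepsilon k) \leq \mE Z / \varepsilon < \infty$ together with the first Borel--Cantelli lemma (which requires no independence) forces each residual divided by $n$ to vanish almost surely. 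The main subtlety lies precisely here: the residuals at different $k$ are identically distributed but not independent, so one must rely on direct tail-summability rather than on the independent version of Borel--Cantelli. The dependence of $\mu$ on only $X$, $\nu$, and $\rho$ is then manifest, since $T(0, ny) = T(0, \sigma^n \cdot 0)$ is an intrinsic quantity of the abstract crystal lattice with its $L$-action and edge-time distribution.
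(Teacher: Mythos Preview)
Your proof is correct and follows essentially the same route as the paper: apply the subadditive ergodic theorem along the subsequence $kN$ with $Nx\in\Gamma$, then bridge to all $n$ via the triangle inequality and Borel--Cantelli on the integrable residuals, whose law depends only on $r_n$. (One cosmetic slip: the lower half of your displayed sandwich should read $T(0,k_n y)-T(nx,k_n y)$, or else replace both occurrences of $k_n y$ on the left by $(k_n+1)y$; as written the inequality does not follow from the triangle inequality, but either corrected version does and the rest of the argument is unaffected.)
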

The following theorem~\cite[Theorem 1.10]{subadditive_ergodic} is essential for the proof of Proposition~\ref{prp:time_constant}. 
\begin{thm}[Subadditive ergodic theorem]
\rm
\label{thm:subadditive}
Suppose a sequence $(X_{m, n})_{0 \leq m < n}$ of random variables satisfies the following conditions: 
	\begin{itemize}
	\item $X_{0,n} \leq X_{0,m} + X_{m,n}$ for all $0 < m < n$; 
\item the joint distributions of the two sequences
\begin{equation*}
	(X_{m, m+k})_{k\geq 1} \text{ and } (X_{m+1, m+k+1})_{k\geq 1}
\end{equation*}
is the same for all $m \geq 0$; 
\item for each $k \geq 1$, the sequence $(X_{nk, (n+1)k})_{n\geq 0}$ is stationary and ergodic; and 
\item $\mE X_{0, 1} < \infty$ and $\mE X_{0, n} > -cn$ for some finite constant $c < \infty$. 
\end{itemize}
Then 
\begin{align*}
	\frac{X_{0, n}}{n} \longrightarrow \inf_n \frac{\mE X_{0, n} }{n} = \lim_{n\to \infty}\frac{\mE X_{0, n}}{n} 
\end{align*}
as $n \longrightarrow \infty$ almost surely and in $L_1$.
\end{thm}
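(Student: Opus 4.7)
The plan is to prove the Kingman subadditive ergodic theorem in three stages: fixing the deterministic constant $\gamma$, establishing an almost sure upper bound, and proving the matching lower bound. For the setup, the distributional-stationarity hypothesis gives $\mE X_{m, n} = \mE X_{0, n-m}$, so combined with subadditivity the sequence $a_n := \mE X_{0, n}$ satisfies $a_n \leq a_m + a_{n-m}$; Fekete's subadditivity lemma then yields $a_n/n \to \gamma := \inf_n a_n/n$, with the assumption $\mE X_{0, n} > -cn$ keeping $\gamma \geq -c$ finite.

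For the upper bound $\limsup_n X_{0, n}/n \leq \gamma$ almost surely, I would fix $k \geq 1$ and iterate subadditivity to write $X_{0, qk} \leq \sum_{j = 0}^{q-1} X_{jk, (j+1)k}$. Since the third hypothesis makes the summands stationary and ergodic in $j$, the Birkhoff ergodic theorem gives $q^{-1} \sum_{j=0}^{q-1} X_{jk, (j+1)k} \to \mE X_{0, k}$ almost surely, so $\limsup_q X_{0, qk}/(qk) \leq \mE X_{0, k}/k$. For general $n = qk + r$ with $0 \leq r < k$, the tail $X_{qk, n}$ is negligible by a Borel--Cantelli estimate using its identical distribution and finite first moment; minimizing over $k$ then collapses the bound to $\gamma$.

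The matching lower bound $\liminf_n X_{0, n}/n \geq \gamma$ almost surely is the main obstacle, since a direct Fatou inequality points the wrong way. I would apply Kingman's classical stopping-time cut-and-paste argument: set $X^* := \liminf_n X_{0, n}/n$, verify that subadditivity combined with shift-stationarity forces $X^*$ to be shift-invariant and hence almost surely constant by ergodicity, and suppose for contradiction that $X^* \leq \gamma - \epsilon$ on a set of positive measure. For fixed $\epsilon > 0$ and a large truncation $M$, define $\tau(\omega) := \min\bigl(M, \inf\{n \geq 1 : X_{0, n}(\omega) < (X^* + \epsilon) n\}\bigr)$, iterate these stopping-time blocks along the orbit to decompose $\{0, 1, \ldots, N\}$ into pieces whose aggregate cost beats $(\gamma - \epsilon/2)N$ plus a bounded remainder, and take expectations to contradict the characterization $\gamma = \inf_n \mE X_{0, n}/n$. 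The $L^1$ convergence then follows from almost-sure convergence together with uniform integrability, since $X_{0, n}/n$ is dominated above by the $L^1$-convergent stationary average $n^{-1} \sum_{i=1}^n X_{i-1, i}$ and below by the constant $-c$.
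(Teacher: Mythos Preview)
The paper does not supply its own proof of this theorem: it is quoted verbatim as \cite[Theorem~1.10]{subadditive_ergodic} (Liggett's improved subadditive ergodic theorem) and used as a black box in the proof of Proposition~\ref{prp:time_constant}. So there is no in-paper argument to compare against; your sketch is a self-contained attempt at Liggett's result.

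Your outline of the deterministic part (Fekete) and of the upper bound (Birkhoff on the ergodic diagonal sequences, plus a Borel--Cantelli tail estimate for the remainder $X_{qk,qk+r}$) is correct and standard. The gap is in the lower bound. You write that $X^*=\liminf_n X_{0,n}/n$ is ``shift-invariant and hence almost surely constant by ergodicity,'' and then run the stopping-time construction ``along the orbit.'' Both of these steps presuppose a single measure-preserving transformation $T$ on the underlying probability space with $X_{m,n}\circ T = X_{m+1,n+1}$ and with $T$ ergodic. Liggett's hypotheses do \emph{not} furnish such a $T$: the second bullet gives only equality of one-dimensional marginal laws of the rows, and the third bullet gives ergodicity only of each fixed diagonal sequence $(X_{nk,(n+1)k})_{n\ge 0}$, not of the full array under a common shift. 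Under Kingman's original (stronger) stationarity assumption your argument goes through, but under the hypotheses actually stated here the ``iterate along the orbit'' step and the ``invariant $\Rightarrow$ constant'' step both need justification. Liggett's proof handles this by working directly with expectations and the diagonal ergodic averages rather than invoking an ambient ergodic shift; if you want a complete argument under these weaker hypotheses you should consult \cite{subadditive_ergodic} for that refinement.
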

We now turn to the proof of Proposition~\ref{prp:time_constant}. 
\begin{proof}[Proof of Proposition~\ref{prp:time_constant}]
	Take the minimum number $N \in \mathbb{N}$ with $Nx \in \Gamma$. We can easily see that the array $(T(mNx, nNx))_{0\leq m<n}$ of random variables satisfies the conditions of Theorem~\ref{thm:subadditive}. Note that the integrability of $X_{0,1}$ follows from the assumption \eqref{eq:assumption_for_timeconstant} and Lemma~\ref{lem:finite_moment}. Thus we see that the limit 
\begin{equation*}
	 \lim_{k \to \infty} \frac{T(0, kNx)}{k}
\end{equation*}
exists almost surely and is constant. We set
\begin{equation*}
	\mu(x):= \lim_{k \to \infty} \frac{T(0, kNx)}{kN}. 
\end{equation*}
Note that $\mu(x)$ depends on only $Nx$, the graph $X$ and the lattice group $\Gamma:= \rho(L)$. Take $j = 1, 2, \ldots, N-1$ arbitrarily. From the triangle inequality, we have
\begin{align*}
	|T(0, (kN+j)x) - T(0, kNx)| \leq T((kN+j)x, kNx)
\end{align*}
and thus, for any $\epsilon > 0$, 
\begin{align*}
	&\sum_{k=1}^{\infty}\mathbb{P}(|T(0, (kN+j)x) - T(0, kNx)| > \epsilon k) \\
\leq &\sum_{k=1}^{\infty} \mathbb{P}(T((kN+j)x, kNx) > \epsilon k)  \\
 =  & \sum_{k=1}^{\infty} \mathbb{P}(T(jx, 0) > \epsilon k )  < \infty. 
\end{align*}
Here, the first equality follows from Remark~\ref{rem:distribution_translation}, and the finiteness is due to the integrability of $T(jx, 0)$. Then it follows from the Borel--Cantelli lemma that
\begin{align*}
	\mathbb{P}(\limsup_{k \to \infty} \{ |T(0, (kN+j)x) - T(0, kNx)| > \epsilon k \}) = 0. 
\end{align*}
By taking the complementary event, we have
\begin{align*}
	\mathbb{P}\left( \bigcup_n \bigcap_{k \geq n} \{ |T(0, (kN+j)x) - T(0, kNx)| \leq \epsilon k \} \right) = 1,
\end{align*}
which implies the almost sure convergence
\begin{align*}
	\frac{1}{kN} |T(0, (kN+j)x) - T(0, kNx)| \longrightarrow 0
\end{align*}
as $k \longrightarrow \infty$. Therefore, we have 
\begin{align*}
	\frac{1}{kN+j}T(0, (kN+j)x) \longrightarrow \mu(x)
\end{align*}
almost surely, which implies $T(0, nx)/n \longrightarrow \mu(x)$ as $n \longrightarrow \infty$. 
\end{proof}

We summarize the basic properties of $\mu$. 
\begin{prp}
\rm
\label{prp:properties}
	The following hold:
\begin{enumerate}
\item $\mu(x+y) \leq \mu(x) + \mu(y)$ for any $x$, $y\in \mathcal D$;
\item $\mu(cx) = |c| \mu(x)$ for any $c \in \mathbb{Q}$ and $x\in \mathcal D$. 
\end{enumerate}
\end{prp}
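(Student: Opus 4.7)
The plan for both parts is to combine the triangle inequality for passage times with the $\Gamma$-translation invariance of Remark~\ref{rem:distribution_translation} and then pass to the limit using the $L^1$ conclusion of the subadditive ergodic theorem underlying Proposition~\ref{prp:time_constant}.

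For part (1), I would first choose a positive integer $N$ such that $Nx$, $Ny$, and $N(x+y)$ all lie in $\Gamma$ --- for instance $N = N_x N_y$, where $N_x, N_y$ are the minimal denominators of $x, y$ from the definition of $\mathcal{D}$. Along the subsequence $n = kN$, the triangle inequality for $T$ yields
$$T(0, n(x+y)) \leq T(0, nx) + T(nx, nx + ny),$$
and applying Remark~\ref{rem:distribution_translation} with ${\bf b} = -nx \in \Gamma$ identifies the distribution of $T(nx, nx+ny)$ with that of $T(0, ny)$. Taking expectations and dividing by $n$ gives
$$\frac{\mE T(0, n(x+y))}{n} \leq \frac{\mE T(0, nx)}{n} + \frac{\mE T(0, ny)}{n},$$
and each term converges to the corresponding value of $\mu$ as $k \to \infty$ by the $L^1$ half of Theorem~\ref{thm:subadditive}, which underlies the definition of $\mu$ in Proposition~\ref{prp:time_constant}. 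This yields $\mu(x+y) \leq \mu(x) + \mu(y)$.

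For part (2), I would first handle $c = p/q$ with $p, q \in \mathbb{N}$ by specializing the almost-sure convergence $T(0, mx)/m \to \mu(x)$ from Proposition~\ref{prp:time_constant} to the subsequence $m = kp$, which gives
$$\frac{T(0, n \cdot cx)}{n} = \frac{T(0, kpx)}{kq} = \frac{p}{q}\cdot \frac{T(0, kpx)}{kp} \longrightarrow c\,\mu(x)$$
along $n = kq$; since the whole sequence $T(0, n \cdot cx)/n$ converges to $\mu(cx)$ by Proposition~\ref{prp:time_constant}, the subsequential limit must equal $\mu(cx)$, giving $\mu(cx) = c\mu(x)$. To cover negative scalars, I would verify $\mu(-x) = \mu(x)$: for $n$ a multiple of $N_x$, Remark~\ref{rem:distribution_translation} with ${\bf b} = nx$ equates the distributions of $T(0, -nx)$ and $T(nx, 0)$, and the undirected convention on $X$ forces $T(nx, 0) = T(0, nx)$. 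Matching expectations, dividing by $n$, and passing to the $L^1$ limit gives $\mu(-x) = \mu(x)$, so combining with the positive case yields $\mu(cx) = |c|\mu(x)$ for every $c \in \mathbb{Q}$.

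The main subtle point is that almost-sure limits do not automatically respect distributional equalities pathwise, so $T(nx, nx+ny)$ cannot simply be substituted for $T(0, ny)$ inside the triangle inequality. This is exactly why the argument passes through expectations and invokes the $L^1$ half of Theorem~\ref{thm:subadditive} rather than attempting a direct pathwise comparison.
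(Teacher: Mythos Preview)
Your proof is correct and follows the same strategy as the paper: the triangle inequality for $T$, $\Gamma$-translation invariance (Remark~\ref{rem:distribution_translation}), and then a passage to the limit. The only technical difference is in how that last step is executed for part~(1): you take expectations and invoke the $L^1$ half of Theorem~\ref{thm:subadditive}, whereas the paper stays with convergence in distribution---observing that the left side of \eqref{eq:triangle_inequality_of_T} converges almost surely to the constant $\mu(x+y)-\mu(x)$ while the right side converges in distribution to the constant $\mu(y)$, so the inequality passes to the limit. Both routes handle the same obstacle you flag (that the distributional identity for $T(nx,nx+ny)$ is not pathwise), and neither buys anything the other does not; your expectation argument is arguably the more transparent of the two.
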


\begin{proof}
Fix $x$, $y \in \mathcal D$. Let $N$ be the minimum number with $Nx$, $Ny \in \Gamma$. It follows from Remark~\ref{rem:distribution_translation} and Proposition~\ref{prp:time_constant} that 
\begin{equation}
\label{eq:translation}
	\frac{T(kNx, kNy)}{kN} \sim \frac{T(0, kN(y - x))}{kN} \longrightarrow_d \mu(y - x)
\end{equation}	
as $k \longrightarrow \infty$. Here, $\sim$ means the distributions are the same, and $\longrightarrow_d$ represents the convergence in distribution. By the definition of $T(\cdot, \cdot)$, we have the following triangle inequality
\begin{align}
\label{eq:triangle_inequality_of_T}
	\frac{T(0, kN(x+y))}{kN} - \frac{T(0, kNx)}{kN} \leq \frac{T(kNx, kN(x+y))}{kN}.  
\end{align}
The left hand side converges in distribution to $\mu(x+y)- \mu(x)$. From \eqref{eq:translation}, the right hand side converges to $\mu(y)$, and we obtain the first item. \par
Let $c \in \mathbb{Q}_{\geq 0}$. Then we have
\begin{align*}
	\mu(cx) = \lim_{n \to \infty} \frac{1}{n} T(0, ncx) = c \lim_{n \to \infty} \frac{1}{nc} T(0, ncx) = c \mu(x). 
\end{align*}
The symmetry of $T$ and  \eqref{eq:translation} implies that $\mu(x) = \mu(-x)$. Thus the second item  also holds for all $c \in \mathbb{Q}$. 
\end{proof}
Hereinafter, we think of $\mu$ as a function defined on the space $\R^d$ by continuous expansion. The following proposition states that the positivity of $\mu(x)$ depends on the probability $\nu(0)$ that the random time $t_e$ is equal to $0$. 
\begin{prp}
\rm
\label{prp:positivity}
The following hold: 
\begin{itemize}
	\item if $\nu(0) < p_c(X)$, then $\mu(x) > 0$ for all $x \in \R^d \setminus \{0\}$; and 
\item if $\nu(0) \geq p_c(X)$, then  $\mu(x) = 0$ for all $x \in \R^d$. 
\end{itemize}
\end{prp}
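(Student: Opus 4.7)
I would prove both parts by coupling the FPP configuration with the Bernoulli bond percolation $\eta \in \{0,1\}^E$ defined by $\eta_e := \mathbf{1}[t_e = 0]$; this is Bernoulli percolation with parameter $p = \nu(0)$, and the two halves of the proposition correspond to its subcritical and supercritical regimes.

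\textbf{Subcritical side ($\nu(0) < p_c(X)$).} Since $\nu([0, \varepsilon]) \downarrow \nu(0)$ as $\varepsilon \downarrow 0$, I would pick $\varepsilon > 0$ with $\nu([0, \varepsilon]) < p_c(X)$, so that the percolation of ``fast'' edges ($t_e \leq \varepsilon$) is strictly subcritical. Using exponential decay of subcritical cluster radii (the Aizenman--Barsky--Menshikov sharpness theorem, which extends to the quasi-transitive graph $X$), I would show that some $c > 0$ exists such that, with probability tending to $1$, every self-avoiding path in $X$ of graph length $\geq n$ starting at $0$ contains at least $cn$ ``slow'' edges ($t_e > \varepsilon$). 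As the graph distance from $0$ to the realized vertex nearest $nx$ is $\Omega(n)$ for $x \neq 0$, this yields $T(0, nx) \geq c\varepsilon n$ eventually, hence $\mu(x) \geq c\varepsilon > 0$.

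\textbf{Supercritical side ($\nu(0) \geq p_c(X)$).} Suppose first $\nu(0) > p_c(X)$. Then the zero-time subgraph is supercritical, with an almost surely unique infinite cluster $\mathcal{C}$, and any two vertices of $\mathcal{C}$ are joined by a zero-time path. Let $D(v) := \inf\{T(v,u) : u \in \mathcal{C}\}$. Using $L$-invariance of the model, sub-exponential tails of $D$ in the supercritical regime, and a Cox--Durrett style ergodic argument, I would obtain $D(0)/n, D(nx)/n \to 0$ almost surely. Since $T(0, nx) \leq D(0) + D(nx)$, this gives $\mu(x) = 0$. The boundary case $\nu(0) = p_c(X)$, when no strict supercriticality can be achieved by truncation, needs a separate renormalization: one uses the existence at criticality of arbitrarily large finite zero-time clusters to patch together a chain of ``good'' mesoscopic blocks between $0$ and $nx$, paying a vanishing cost $\eta$ per unit length for the inter-block jumps, and lets $\eta \downarrow 0$.

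\textbf{Main obstacle.} The critical boundary case $\nu(0) = p_c(X)$ is the delicate one, since no infinite zero-time cluster is typically available. Handling it requires the appropriate cluster-size and renormalization inputs on the crystal lattice $X$; these are classical on $\bbL^d$ and extend to $X$ thanks to the quasi-transitivity induced by the cocompact free $L$-action, but transporting them rigorously is where the real work lies.
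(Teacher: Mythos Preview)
The paper offers no self-contained proof here: it cites Kesten's Theorem~6.1 in \emph{Aspects}, with a footnote noting that Kesten's statement uses the threshold $p_T(X)$ (divergence of the expected cluster size) rather than $p_c(X)$, and that $p_c(X)=p_T(X)$ holds on crystal lattices. Your subcritical argument and your strictly-supercritical argument via the infinite zero-time cluster are the standard ones and are in line with Kesten's. Where the approaches differ is the boundary case $\nu(0)=p_c(X)$: you propose a mesoscopic-block renormalization and correctly flag it as the main obstacle, whereas the paper's route sidesteps any direct treatment of criticality by working with $p_T$ and then invoking the sharpness identity $p_c=p_T$ (Menshikov/Aizenman--Barsky, valid on quasi-transitive graphs and hence on crystal lattices)---the very theorem you already use on the subcritical side for exponential decay. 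That identity absorbs the critical case into Kesten's existing $p_T$-argument with no extra work. The gain of your route would be a self-contained proof not relying on the black-box citation, but only if the critical renormalization can actually be carried out on a general crystal lattice, which is the nontrivial part you yourself acknowledge.
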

The proof of this proposition is similar to that of~\cite[Theorem~6.1]{Aspects}\footnote{Although \cite[Theorem~6.1]{Aspects} is stated by using another critical probability $p_T(X)$, the critical point at which the expected value of the cluster size becomes infinite, the uniqueness $p_c(X)=p_T(X)$ can be shown for arbitrary crystal lattice $X$ in the same way as $\mathbb{L}^d$ (see e.g. \cite{Grimmett}). }.  
 From Propositions~\ref{prp:properties} and \ref{prp:positivity}, the time constant $\mu$ is a norm on $\R^d$ whenever $\nu(0) < p_c(X)$. 
\par
We further assume that the time distribution $\nu$ of each edge satisfies
\begin{equation}
\label{eq:assumption_of_shape_theorem}
	\mE\min\{t_1, t_2, \ldots, t_{l_X} \}^d < \infty,
\end{equation}
which is an analogue of \eqref{eq:a_moment_condition}. Note that this condition is stronger than \eqref{eq:assumption_for_timeconstant}. The shape theorem follows from the following almost sure convergence: 
\begin{equation}
\label{eq:convergence_assumption}
	\lim_{x \in \mathcal D,\, \|x\|_1 \to \infty} \left(\frac{T(0, x)}{\|x\|_1} - \mu\left(\frac{x}{\|x\|_1}\right) \right) = 0,
\end{equation}
which states that the convergence $T(0, nx)/n \longrightarrow \mu(x)$ is uniform on the directions. The proof of \eqref{eq:convergence_assumption} is complicated, but it is essentially the same as that of \cite[Theorem 2.16]{FPP_history}. We provide this proof in the appendix of this paper.  
By combining with Proposition~\ref{prp:positivity}, we obtain the following, which is a generalization of the shape theorem.  

\begin{thm}
\rm
\label{thm:shape_theorem}
Let $(\Phi, \rho)$ be a periodic realization of a $d$-dimensional crystal lattice $X$. Suppose the time distribution $\nu$ satisfies \eqref{eq:assumption_of_shape_theorem}. Then the following hold: 
\begin{itemize}
	\item[(a)] If $\nu(0) < p_c(X)$, then for each $\epsilon > 0$, 
\begin{equation*}
	(1-\epsilon)\mB \subset \frac{B(t)}{t} \subset (1+\epsilon)\mB  \text{ for all large } t
\end{equation*}
holds almost surely, where the limit shape $\mB \subset \R^d$ is the unit ball
\begin{equation}
\label{eq:the_limit_shape}
	\mB = \{x \in \R^d \,:\, \mu(x) \leq 1\}. 
\end{equation}

\item[(b)] If $\nu(0) \geq p_c(X)$, then for all $R > 0$, 
\begin{equation*}
	\{x \in \R^d \,:\, \|x\|_1 \leq R \} \subset \frac{B(t)}{t} \text{ for all large } t
\end{equation*}
holds almost surely. 
\end{itemize}
\end{thm}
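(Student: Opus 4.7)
The plan is to deduce both parts of Theorem~\ref{thm:shape_theorem} from the uniform convergence \eqref{eq:convergence_assumption}, whose proof the author defers to the appendix. The content of \eqref{eq:convergence_assumption} is that $|T(0,x) - \mu(x)| = o(\|x\|_1)$ uniformly in direction as $\|x\|_1 \to \infty$ along $\mathcal{D}$; once this uniform-in-direction statement is granted, the shape theorem reduces to essentially algebraic bookkeeping.

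For part (a), I would first invoke Propositions~\ref{prp:properties} and \ref{prp:positivity} to conclude that $\mu$ is a genuine norm on $\R^d$, so that by compactness of the unit sphere there exist $c_1, c_2 > 0$ with $c_1\|x\|_1 \leq \mu(x) \leq c_2\|x\|_1$ for every $x$. Given $\epsilon > 0$ I would fix an auxiliary $\delta \in (0, c_1\epsilon/(1+\epsilon))$ and invoke \eqref{eq:convergence_assumption} to obtain, almost surely, a (random) radius $R_0$ beyond which $|T(0,x) - \mu(x)| \leq \delta\|x\|_1$. To show $(1-\epsilon)\mB \subset B(t)/t$ for $t$ large, I pick $y \in (1-\epsilon)\mB$: if $\|ty\|_1 \geq R_0$, then the upper bound in \eqref{eq:convergence_assumption} gives $T(0,ty) \leq \mu(ty) + \delta\|ty\|_1 \leq (1-\epsilon)t(1 + \delta/c_1) \leq t$ by the choice of $\delta$; if instead $\|ty\|_1 < R_0$, then $ty$ lies in a fixed bounded region whose closest realized vertex belongs to a finite set independent of $t$, and each such vertex has a.s.\ finite passage time from $0$, so $T(0,ty) \leq t$ once $t$ exceeds the corresponding finite maximum. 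The reverse inclusion $B(t)/t \subset (1+\epsilon)\mB$ follows by contrapositive: if $\mu(y) > 1+\epsilon$, then $\|ty\|_1 \geq t(1+\epsilon)/c_2$ is automatically large as $t \to \infty$, and the lower bound from \eqref{eq:convergence_assumption} gives $T(0,ty) \geq \mu(ty)(1-\delta/c_1) > t$.

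Part (b) is the degenerate case of the same argument: Proposition~\ref{prp:positivity} forces $\mu \equiv 0$, so \eqref{eq:convergence_assumption} collapses to $T(0,x)/\|x\|_1 \to 0$. Fixing $R > 0$ and $y$ with $\|y\|_1 \leq R$, I would check $T(0,ty) \leq t$ for large $t$ by the same split: large $\|ty\|_1$ is handled by uniform convergence applied with the choice $\delta = 1/(2R)$, and bounded $\|ty\|_1$ by the finite-vertex argument from part (a).

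The main obstacle is really \eqref{eq:convergence_assumption} itself, which quantifies the uniformity of the directional convergence from Proposition~\ref{prp:time_constant} and uses the stronger moment condition \eqref{eq:assumption_of_shape_theorem}; granted that input, the deduction above is essentially routine. The one wrinkle I would still need to verify carefully is that \eqref{eq:convergence_assumption} is stated only for $x \in \mathcal{D}$ whereas $B(t)/t$ involves arbitrary points of $\R^d$; this is reconciled by the fact that $T(0,y)$ is defined through the closest realized vertex and that $\Phi(V)$ is a Delone subset of $\R^d$ (being a finite union of $\Gamma$-translates), so replacing $ty$ by a nearby rational point costs only an $O(1)$ passage-time error, negligible against the $O(\|ty\|_1)$ scale.
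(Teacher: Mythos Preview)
Your proposal is correct and matches the paper's approach exactly: the paper simply asserts that Theorem~\ref{thm:shape_theorem} follows from the uniform convergence \eqref{eq:convergence_assumption} (proved in the appendix) combined with Proposition~\ref{prp:positivity}, and you have supplied precisely the routine bookkeeping that the paper leaves implicit. Your identification of the $\mathcal D$ versus $\R^d$ wrinkle and its resolution via the Delone property of $\Phi(V)$ is also on point and is again a detail the paper does not spell out.
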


\subsection{Properties of the limit shape}
In this subsection, we summarize the basic relations between a periodic realization $(\Phi, \rho)$ and the limit shape, denoted by $\mathcal B_{\Phi}$. 
\begin{prp}
\rm
\label{prp:properties_of_limit}
The following hold:
\begin{enumerate}
\item The limit shape $\mB$ depends on only $X$, $\nu$, and the period $\rho$; that is, for two periodic realizations $(\Phi, \rho)$, $(\Phi^{\prime}, \rho^{\prime})$, we have $\mB_{\Phi} = \mB_{\Phi^{\prime}}$ whenever $\rho = \rho^{\prime}$. 
\item $\mathcal B_{\Phi} = \mathcal B_{\Phi + {\bf b}}$ for any ${\bf b} \in \R^d$, where $\Phi + {\bf b}$ is the periodic realization obtained by the map $x \mapsto \Phi(x) + {\bf b}$.  
\item $\, \mathcal B_{A\circ\Phi} = A\mathcal B_{\Phi}$ for any $A \in GL_d(\R)$ (note that $A\circ\Phi$ is also a periodic realization, whose period homomorphism is given by $A\circ\rho$). 
\end{enumerate}
\end{prp}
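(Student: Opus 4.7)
The plan is to derive each item from the characterization $\mB_\Phi = \{x \in \R^d \,:\, \mu_\Phi(x) \le 1\}$ of Theorem~\ref{thm:shape_theorem} (the argument adapts to the degenerate case $\nu(0) \ge p_c(X)$ of Theorem~\ref{thm:shape_theorem}~(b)), by tracking how the time constant $\mu_\Phi$ transforms under each operation. The underlying principle is that whenever $nx \in \Gamma = \rho(L)$, the point $nx \in \R^d$ is realized by the unique vertex $\sigma_n \cdot 0 \in V$ with $\rho(\sigma_n) = nx$, so $T(0,nx)$ reduces to a purely graph-theoretic passage time between two specific vertices of $X$, independent of the ambient realization.

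For item~(1), if $(\Phi,\rho)$ and $(\Phi',\rho)$ share the same period homomorphism then they share the lattice group $\Gamma$, the rational-point set $\mathcal D$, and the assignment $nx \leftrightarrow \sigma_n \cdot 0$ for each $x\in\mathcal D$ with $nx\in\Gamma$. The preceding remark therefore gives $T_\Phi(0,nx) = T_{\Phi'}(0,nx)$ as random variables, whence $\mu_\Phi = \mu_{\Phi'}$ on $\mathcal D$ by Proposition~\ref{prp:time_constant}, and on $\R^d$ by continuous extension; hence $\mB_\Phi = \mB_{\Phi'}$.

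For item~(3), the composition $A\circ\Phi$ is a periodic realization with period homomorphism $A\circ\rho$ and lattice group $A\Gamma$, so its rational-point set is $A(\mathcal D)$. For $x\in\mathcal D$ with $nx\in\Gamma$, the point $nAx\in A\Gamma$ is realized under $A\circ\Phi$ by the very same vertex $\sigma_n\cdot 0$ that realizes $nx$ under $\Phi$. Thus $T_{A\Phi}(0,nAx) = T_\Phi(0,nx)$ and, passing to the limit, $\mu_{A\Phi}(Ax) = \mu_\Phi(x)$ on $\mathcal D$, hence $\mu_{A\Phi}\circ A = \mu_\Phi$ on $\R^d$ by continuity. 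Substituting $y = Ax$ in the definition of $\mB_{A\Phi}$ gives $\mB_{A\Phi} = A\mB_\Phi$.

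For item~(2), observe that $(\Phi+{\bf b})(\sigma x) = \Phi(x) + \rho(\sigma) + {\bf b} = (\Phi+{\bf b})(x) + \rho(\sigma)$, so $\Phi+{\bf b}$ is periodic with the same period homomorphism $\rho$. The complication is that $(\Phi+{\bf b})(0) = {\bf b}\ne 0$ violates the origin convention fixed in Sect.~\ref{subsec:settings}, so item~(1) does not apply verbatim. I would instead compare percolation regions directly: the closest realized vertex of $y\in\R^d$ under $\Phi+{\bf b}$ equals the closest realized vertex of $y-{\bf b}$ under $\Phi$, and the closest realized vertex of $0\in\R^d$ under $\Phi+{\bf b}$ is some vertex $v^*\in V$ depending only on ${\bf b}$. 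A sandwich-style comparison then gives $B_{\Phi+{\bf b}}(t - T(0,v^*)) \subseteq B_\Phi(t) + {\bf b} \subseteq B_{\Phi+{\bf b}}(t + T(0,v^*))$, and since $T(0,v^*)$ is an a.s.\ finite random variable, dividing by $t$ and letting $t\to\infty$ yields $\mB_{\Phi+{\bf b}} = \mB_\Phi$. This boundary control is the main obstacle: one uses that $V/L$ being finite forces $v^*$ to sit at uniformly bounded graph distance from vertex $0$, and the moment assumption of Lemma~\ref{lem:finite_moment} under \eqref{eq:assumption_for_timeconstant} ensures $T(0,v^*) < \infty$ almost surely, so that the origin-displacement error is absorbed in the $o(t)$ regime.
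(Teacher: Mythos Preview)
Your proposal is correct and follows essentially the same approach as the paper. For items~(1) and~(3) you argue exactly as the paper does (the paper simply cites Proposition~\ref{prp:time_constant} for~(1) and writes $T(0,x)=T'(0,Ax)$ for~(3)); for item~(2) you set up a two-sided sandwich $B_{\Phi+{\bf b}}(t - T(0,v^*)) \subseteq B_\Phi(t) + {\bf b} \subseteq B_{\Phi+{\bf b}}(t + T(0,v^*))$, whereas the paper proves only the right-hand inclusion and then swaps ${\bf b}$ with $-{\bf b}$ to get the other containment --- a purely cosmetic difference.
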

\begin{proof}
The first item follows from Proposition~\ref{prp:time_constant} and \eqref{eq:the_limit_shape}.\par
Fix ${\bf b} \in \R^d$ arbitrarily. Let $T^{\prime}(\cdot, \cdot)$ be the first passage time with respect to the realization $\Phi + {\bf b}$. Then we have
\begin{align*}
	T^{\prime}(0, x) = T(-{\bf b}, x - {\bf b}) \leq T(0, -{\bf b}) + T(0, x - {\bf b}),  
\end{align*}
which implies the inclusion
\begin{align*}
	B(t)+ {\bf b} \subset B^{\prime}(t+ T(0, -{\bf b})). 
\end{align*}
By dividing by $t$ and letting $t \longrightarrow \infty$, 
we obtain $\mathcal B_{\Phi} \subset \mathcal B_{\Phi + {\bf b}}$. By replacing ${\bf b}$ with $-{\bf b}$, we obtain the opposite inclusion. Thus, the proof of the second item is completed. \par
For the third item, denote by $\Gamma^{\prime}, T^{\prime}(\cdot, \cdot), \mu^{\prime}(\cdot)$ the characters with respect to $A\circ\Phi$. Then we have $\Gamma^{\prime} = A\Gamma$ and the relation
\begin{align*}
	T(0, x) = T^{\prime}(0, Ax)
\end{align*}
holds for any $x \in \Gamma$. This implies that $\mu(x) = \mu^{\prime}(Ax)$ holds for any $x \in \R^d$. From \eqref{eq:the_limit_shape}, the proof is completed. 
\end{proof}

\begin{ex}
\rm
In Fig.~\ref{fig:realizations}, since the realizations shown in the left and the center have the same period homomorphism $\rho$, the limit shapes obtained from the FPP model are the same, although the realizations are different. 
\end{ex}
\begin{rem}
\rm
\label{rem:not_essential}
	In Sect.~\ref{subsec:settings}, we assumed that the realization $\Phi$ is nondegenerate in order to formulate the FPP model on a crystal lattice. Proposition~\ref{prp:properties_of_limit} implies that this assumption is not essential for the limit shapes. Indeed, when we consider the FPP model on a lattice with a degenerate realization $(\Phi, \rho)$, we can obtain a nondegenerate one $(\Phi^{\prime}, \rho^{\prime})$ with $\rho = \rho^{\prime}$ and obtain the limit shape $\mB_{\Phi}$.
\end{rem}

Proposition~\ref{prp:properties_of_limit} gives the symmetric property of the limit shape. Let $\text{Sym}(\Phi(X))$ be the symmetric group of the image $\Phi(X)$, that is, 
\begin{align*}
	\text{Sym}(\Phi(X)) = \{g \in M(d) \,:\, g\Phi(X) = \Phi(X)\}, 
\end{align*}
where $M(d)$ is the group of congruent transformations of $\R^d$. We write $M(d)$ as the semi-product $M(d) = \R^d \rtimes O(d)$ of the translation $\R^d$ and the rotation $O(d)$.  Let $p: \text{Sym}(\Phi(X)) \rightarrow O(d)$ be the group homomorphism defined by
\begin{align*}
	({\bf b}, A) \mapsto A. 
\end{align*}
Then we obtain the following. 
\begin{prp}
\rm
	For any $A \in \text{Im}(p)$, $A\mathcal B_{\Phi} = \mathcal B_{\Phi}$. In other words, the limit shape $\mathcal B_{\Phi}$ has the symmetry given by $\text{Im} (p)$. 
\end{prp}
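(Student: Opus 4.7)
The plan is to introduce the auxiliary periodic realization $\Phi^\ast := A \circ \Phi + {\bf b}$, where ${\bf b}$ is any vector satisfying $g := ({\bf b}, A) \in \text{Sym}(\Phi(X))$ (such a ${\bf b}$ exists since $A \in \text{Im}(p)$), and to compute its limit shape in two different ways. Equating the two will yield $A \mB_\Phi = \mB_\Phi$.

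On the one hand, Proposition~\ref{prp:properties_of_limit}~(3) applied to $A\circ\Phi$, combined with Proposition~\ref{prp:properties_of_limit}~(2) applied to the subsequent translation by ${\bf b}$, gives at once
\begin{align*}
\mB_{\Phi^\ast} = \mB_{A\circ\Phi} = A\mB_\Phi.
\end{align*}

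On the other hand, I would show $\mB_{\Phi^\ast} = \mB_\Phi$ using the symmetry hypothesis. The crucial observation is that $\Phi^\ast(X) = g(\Phi(X)) = \Phi(X)$ as subsets of $\R^d$, so $\Phi^\ast$ differs from $\Phi$ only by a relabeling of the abstract vertices of $X$. Indeed, since $\Phi$ is nondegenerate there is a unique bijection $\sigma: V \to V$ with $\Phi \circ \sigma = \Phi^\ast$, and the fact that the isometry $g$ sends edges of $\Phi(X)$ to edges forces $\sigma \in \text{Aut}(X)$. Because $\mP = \nu^{\otimes E}$ is an iid product measure, the pullback action $(\sigma \cdot {\bf t})_e := t_{\sigma^{-1} e}$ preserves $\mP$, from which it follows that $T(\sigma^{-1} u, \sigma^{-1} v)$ and $T(u, v)$ have the same distribution for any vertices $u, v \in V$. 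Since the closest-$\Phi^\ast$-vertex to a point $y \in \R^d$ is $\sigma^{-1}$ of the closest-$\Phi$-vertex to $y$, one obtains that $T_{\Phi^\ast}(0, x)$ and $T_\Phi(0, x)$ are equidistributed once one uses $\sigma^{-1}(0)$ in place of the broken origin convention. Passage to the time constant then gives $\mu^\ast = \mu$, and hence $\mB_{\Phi^\ast} = \mB_\Phi$ by \eqref{eq:the_limit_shape}.

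Combining both descriptions of $\mB_{\Phi^\ast}$ yields $A\mB_\Phi = \mB_\Phi$. The main technical obstacle I anticipate is precisely this origin mismatch in the second step: the convention $\Phi(0)=0$ from Section~\ref{subsec:settings} is broken by $\Phi^\ast$, since $\Phi^\ast(0) = {\bf b} \neq 0$, so one must carefully replace the origin vertex by $\sigma^{-1}(0)$ and absorb the resulting additive error $T(0, \sigma^{-1}(0))$ into an $o(n)$ correction, mirroring the Borel--Cantelli step in the proof of Proposition~\ref{prp:time_constant}. All other ingredients---nondegeneracy of $\Phi$ producing $\sigma$, the iid product structure giving measure preservation of the pullback, and the translation invariance of passage-time distributions---are already in place from Section~\ref{subsec:settings}.
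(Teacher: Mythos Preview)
Your proposal is correct and follows essentially the same route as the paper. The paper also sets $\Phi^\ast = A\circ\Phi + {\bf b}$ and writes the chain $\mathcal B_\Phi = \mathcal B_{\Phi^\ast} = \mathcal B_{A\circ\Phi} = A\mathcal B_\Phi$, citing items~2 and~3 of Proposition~\ref{prp:properties_of_limit} for the last two equalities; the first equality is obtained in one line from $\Phi^\ast(X) = \Phi(X)$, whereas you unpack that step via the automorphism $\sigma$ and the $\mathrm{i.i.d.}$ invariance of $\mP$.

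One comment: the origin-mismatch obstacle you flag is smaller than you fear. Since $\Phi^\ast(X)=\Phi(X)$ and $0\in\Phi(X)$, the vertex $\sigma^{-1}(0)$ satisfies $\Phi^\ast(\sigma^{-1}(0))=0$, so you may simply declare $\sigma^{-1}(0)$ to be the origin for the $\Phi^\ast$-model; with that choice the two FPP models on the realized crystal are identically distributed and no $o(n)$ correction or Borel--Cantelli step is needed. Alternatively, the translation argument already contained in the proof of item~2 of Proposition~\ref{prp:properties_of_limit} absorbs the shift by ${\bf b}$ directly.
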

\begin{proof}
	For any $({\bf b}, A) \in \text{Sym}(\Phi(X))$, we obtain
\begin{align*}
	A\circ\Phi(X) + {\bf b} = \Phi(X), 
\end{align*}
which implies
\begin{align*}
	\mathcal B_{\Phi} = \mathcal B_{A\circ\Phi + {\bf b}} = \mathcal B_{A\circ\Phi} = A\mathcal B_{\Phi}. 
\end{align*}
Here we use the second and third items of Proposition~\ref{prp:properties_of_limit} for the second and third equalities, respectively. 
\end{proof}

\begin{ex}
\rm
As we can see in Fig.~\ref{fig:realizations}, the honeycomb lattice on the left has rotational symmetry, which implies the same symmetry of the limit shape. Note that the limit shape obtained from the lattice in the center is the same as that obtained from the one on the left. Thus, the lattice in the center also has rotational symmetry.   
\end{ex}

\section{Monotonicity of limit shapes}
\label{sec:covering_monotonicity}
\subsection{Setting and examples}
This section is devoted to the proof of Theorem~\ref{thm:covering_monotonicity}. First in this subsection, we give the setting and some examples. \par
Let $X=(V, E)$ be a crystal lattice with $d:=\dim X\geq 2$ and $\Phi:X \rightarrow \R^d$ be a periodic realization with a period homomorphism $\rho:L \rightarrow \R^d$. We consider the rational projection $P: \R^d \rightarrow \R^{d_1}$ onto some $d_1$-dimensional subspace $\R^{d_1}$. As we reviewed in Sect \ref{subsec:crystal_lattices}, the periodic realization $\Phi_1: X_1=(V_1, E_1)\rightarrow \R^{d_1}$ of the $d_1$-dimensional crystal lattice $X_1=X/\text{Ker}(P\circ \rho)$ is induced and the commutative diagram \eqref{diagram:projection} holds. For this $X$ and $X_1$, fix a time distribution $\nu$ such that the moment condition \eqref{eq:assumption_of_shape_theorem} holds for $X$ and $X_1$. Let the configuration space $(\Omega, \mathcal F, \mP)$ be the product space indexed by the disjoint union $E \sqcup E_1$, that is, 
\begin{equation}
\label{eq:probability_space}
	\Omega = [0, \infty)^{E \sqcup E_1}, \ \mP = \nu^{\otimes E\sqcup E_1} 
\end{equation}
and $\mathcal F$ is the Borel $\sigma$-algebra.\par
We fix a vertex $0 \in X$ as the origin and suppose $\Phi(0) = 0 \in \R^d$. We write $\omega(0) \in X_1$, which satisfies $\Phi_1(\omega(0)) = 0 \in \R^{d_1}$, by $0$ for short. We use the same notations as in the previous sections with respect to this process for $X$. For the covered graph $X_1$, we represent them with subscripts, such as $T_1$, $B_1(t)$, $\mu_1$, and $\mB_1$. If the case~(b) of Theorem~\ref{thm:shape_theorem} holds, then $\mB$ (resp. $\mB_1$) is the whole space $\R^d$ (resp. $\R^{d_1}$). \par
Before we turn to the proof, we introduce examples of the application of Theorem~\ref{thm:covering_monotonicity} and remark on several works related to it.  

\begin{ex}
\rm
By the orthogonal projection $P: \R^3 \rightarrow \R^2 \simeq \{(x, y, z) \in \R^3\,:\,x+y+z=0\}$, the cubic lattice $\mathbb{L}^3$ is projected onto the triangular lattice $\mathbb{T}$ realized in $\{(x, y, z) \in \R^3 \,:\, x+y+z=0\}$. The fact that $\mathbb{T}$ is a quotient graph of $\mathbb{L}^3$ is also shown by considering the action $\Z \curvearrowright \mathbb{L}^3$ given by the translation by the vector $(-1, -1, -1)$. \par
Theorem \ref{thm:covering_monotonicity} implies that the projection of the limit shape $\mB$ of $\mathbb{L}^3$ to the the plane $\{(x, y, z)\,:\,x+y+z=0\}$ is bounded below by the limit shape $\mB_1$ of $\mathbb{T}$.  
\end{ex}

\if0
\begin{figure}[H]
\captionsetup{width=0.85\linewidth}
\centering
\includegraphics[width = 0.3\linewidth]{fig_3.eps}
 \caption{The cubic and triangular lattices. The vectors beside each arrow show the actions induced from the covering maps. }
\label{fig:subcovering}
\end{figure}
\fi

\begin{ex}
\rm
Let $X$ be the cubic lattice $\bbL^2$. By the orthogonal projection $P: \R^2 \rightarrow W$ onto the subspace $W:= \{(x_1, x_2)\in \R^2 \,:\, x_2 = x_1\}$, we obtain a periodic realization of the quotient graph $X_1$, defined as the $1$-dimensional line with parallel edges (Fig.~\ref{fig:line_with_parallel}). From the law of large numbers, we can easily see that
\begin{equation*}
	\mu_1((1, 1)) = 2 \mE \min\{t_1, t_2\}
\end{equation*}
for a suitable time distribution $\nu$. From Theorem~\ref{thm:covering_monotonicity}, we can see 
\begin{equation}
	\{x \in W\,:\, \mu_1(x)\leq R\} \subset P(\{y \in \R^2\,:\, \mu(
	y)\leq R\})
\end{equation}
for any $R>0$. By setting $R:=\mu_1((1, 1))$, we can see that there exists $y \in P^{-1}((1,1))$ satisfying
\begin{equation}
\label{eq:upper_estimate}
	\mu(y) \leq \mu_1((1, 1)). 
\end{equation}
From the symmetric property of $\mathbb{L}^2$, the symmetric point $y^{\prime}$ of $y$ with respect to the line $W$ also satisfies \eqref{eq:upper_estimate}. Thus, we obtain
\begin{align*}
	\mu((1, 1)) = \mu((y + y^{\prime})/2) \leq \frac{1}{2}(\mu(y) + \mu(y^{\prime})) \leq \mu_1((1, 1)), 
\end{align*}
and we have the upper estimate
\begin{equation*}
	\mu((1,1)) \leq 2 \mE \min\{t_1, t_2\}. 
\end{equation*}
\end{ex}

\begin{figure}[H]
\captionsetup{width=0.85\linewidth}
\centering
\includegraphics[width = 3cm]{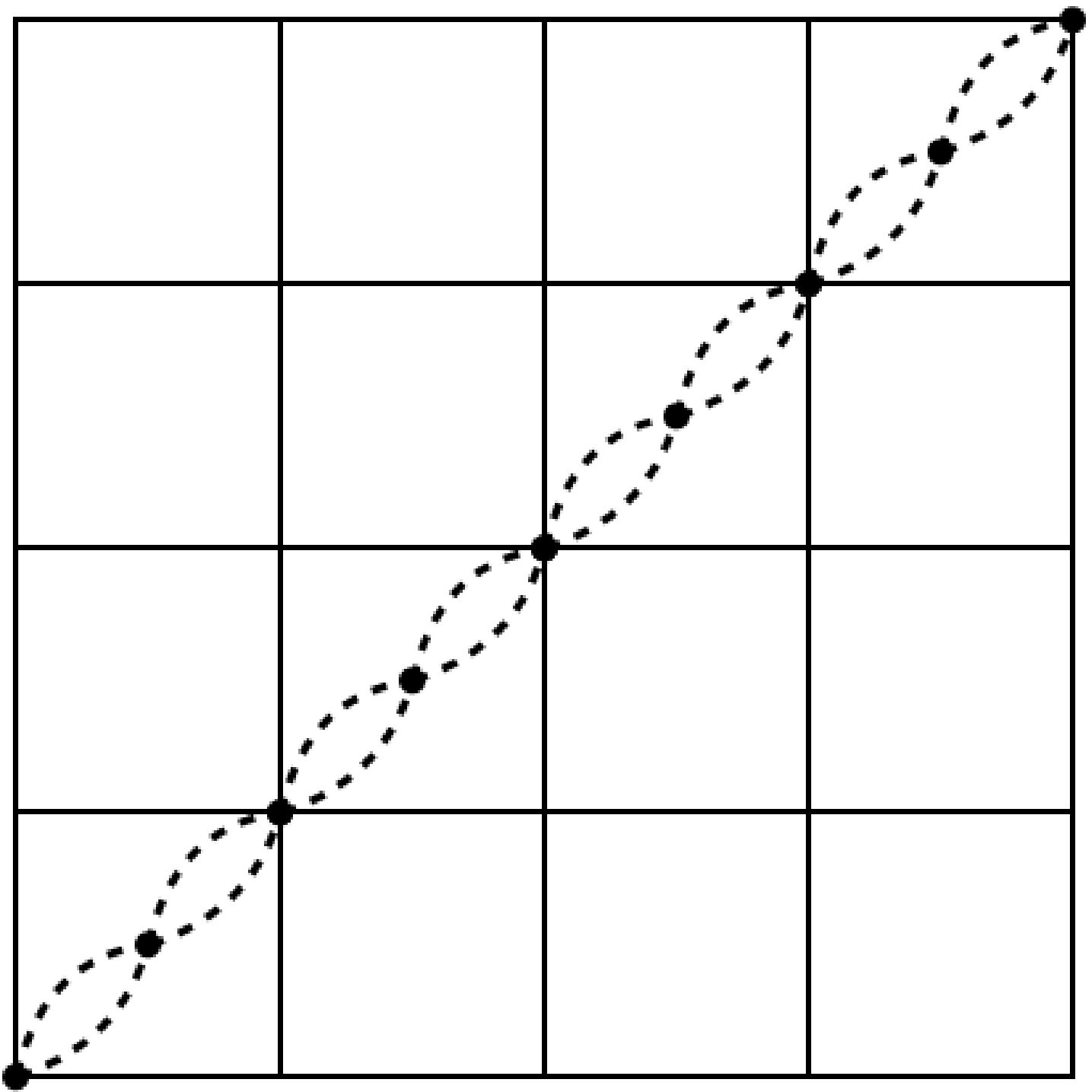}
 \caption{The cubic lattice $\bbL^2$ (black lines) and quotient graph $X_1$ (dots) realized in $W$. }
\label{fig:line_with_parallel}
\end{figure}

\begin{rem}
\rm
Theorem~\ref{thm:covering_monotonicity} implies the inequality
\begin{align}
\label{eq:monotonicity_for_bond_critical}
	p_c(X) \leq p_c(X_1),  
\end{align}
which was shown by~\cite{beyond_Z^d} in a more general setting. Indeed, the time distribution $\nu := p\delta_0 + (1-p)\delta_1$, where $\delta_a$ is the Dirac measure at $a \in \R$, satisfies the assumption of the shape theorem for any parameter $p \in [0, 1]$, and we have
\begin{align*}
	p =\nu(0) < p_c(X) &\Longleftrightarrow \mB\text{ is compact} \\ &\Longrightarrow \mB_1 \text{ is compact}  \Longleftrightarrow p<  p_c(X_1), 
\end{align*} 
which implies \eqref{eq:monotonicity_for_bond_critical}. Here, the second arrow follows from $\mB_1 \subset P(\mB)$. 
\end{rem}

\subsection{Proof of monotonicity}
We prove Theorem~\ref{thm:covering_monotonicity} in this subsection by giving some lemmas. The first lemma is a generalization of~\cite[Proposition 1.14]{Aspects}, stating that the asymptotic speed ``from point to line'' is equal to that ``from point to point''. Here, for a point $x \in \R^d$ and an affine subspace $A \subset \R^d$, we denote by $T(x, A)$ the first passage time from $x$ to $A$:
\begin{equation}
\label{eq:point_to_surface}
	T(x, A) = \inf_{y \in A} T(x, y). 
\end{equation}
The lemma is stated as follows. 
\begin{lem}
\rm
\label{lem:point_to_Affine}
Let $\nu$ be a time distribution satisfying \eqref{eq:assumption_of_shape_theorem} and $A$ be an affine subspace of $\R^d$. Then there exists a point $x \in A$ such that
\begin{equation*}
\label{eq:point_to_Affine}
	\mu(x) = \lim_{n \to \infty} \frac{T(0, nA)}{n}
\end{equation*}
holds almost surely. 
\end{lem}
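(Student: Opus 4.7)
The plan is to identify the desired point as a minimizer of the time constant $\mu$ on $A$, and then show that $T(0, nA)/n$ converges almost surely to this minimum value by matching an elementary upper bound with a lower bound coming from the shape theorem. The case $\nu(0) \geq p_c(X)$ is trivial: Proposition~\ref{prp:positivity} then forces $\mu \equiv 0$, and for any fixed $y_0 \in A$ the bound $T(0, nA) \leq T(0, n y_0)$ yields $T(0, nA)/n \to 0$ almost surely (after extending Proposition~\ref{prp:time_constant} from rational to real points via \eqref{eq:convergence_assumption}). Henceforth I assume $\nu(0) < p_c(X)$, so that $\mu$ is a norm on $\R^d$, in particular continuous and coercive. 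Since $A$ is a closed affine subspace, $\mu$ attains its infimum on $A$ at some $x^* \in A$; set $m := \mu(x^*)$. If $0 \in A$ then $m = 0$ and $\tau_n := T(0, nA) = 0$ trivially, so I further assume $0 \notin A$.

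For the upper bound, the inclusion $nx^* \in nA$ gives $\tau_n \leq T(0, nx^*)$; dividing by $n$ and applying the extended Proposition~\ref{prp:time_constant} yields $\limsup \tau_n/n \leq m$ almost surely. For the lower bound, fix $\epsilon > 0$ and use Theorem~\ref{thm:shape_theorem}(a): almost surely there is a random $t_0$ with $B(t) \subset (1+\epsilon) t \mB$ for all $t \geq t_0$; equivalently, $y \in B(t)$ implies $\mu(y) \leq (1+\epsilon) t$. I would first argue that $\tau_n \to \infty$ almost surely: otherwise some subsequence $\tau_{n_k}$ would be bounded by some $M$, forcing the existence of $y_{n_k} \in n_k A$ with $T(0, y_{n_k}) \leq M+1$, hence $y_{n_k} \in B(M+1) \subset (1+\epsilon)(M+1)\mB$, a bounded set, contradicting $\mathrm{dist}(0, n_k A) \to \infty$. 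Then for all large $n$ one has $\tau_n + 1 \geq t_0$, and picking $y_n \in nA$ with $T(0, y_n) \leq \tau_n + 1$ gives $y_n \in B(\tau_n + 1)$ and thus $\mu(y_n) \leq (1+\epsilon)(\tau_n + 1)$. Since $y_n/n \in A$, homogeneity of $\mu$ gives $n m \leq \mu(y_n) \leq (1+\epsilon)(\tau_n + 1)$, and letting $n \to \infty$ then $\epsilon \to 0$ yields $m \leq \liminf \tau_n/n$.

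Combining the two bounds gives $\tau_n/n \to m = \mu(x^*)$ almost surely, and $x := x^*$ satisfies the lemma. The main delicate points I anticipate are the attainment of the minimum of $\mu$ on $A$ (handled by continuity and coercivity of the norm $\mu$) and the verification that $\tau_n \to \infty$ in the non-degenerate case, without which the shape-theorem inclusion $B(t) \subset (1+\epsilon)t\mB$ could not be applied at $t = \tau_n + 1$. Neither obstacle requires more than the ingredients already assembled in Section~\ref{sec:setting_and_shape_theorem}.
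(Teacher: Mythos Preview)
Your proof is correct and follows essentially the same route as the paper's: the paper phrases the choice of $x$ as a tangent point of a suitable scaling $rA$ to the unit ball $\mathcal{B}$ (which is exactly your minimizer of $\mu$ on $A$), obtains the upper bound identically, and derives the lower bound by the same shape-theorem argument, only presented by contradiction rather than via your preliminary step $\tau_n \to \infty$. The two arguments are interchangeable reformulations of one another.
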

\begin{proof}
This is clear for the case $0 \in A$ or $\nu(0)\geq p_c(X)$ ($\Longrightarrow \mu \equiv 0$). Suppose the case $0 \notin A$ and $\nu(0) <  p_c(X)$. Let $r > 0$ satisfy the affine subspace $rA$ being tangent to the limit shape $\mB= \{x \in \R^d \,:\, \mu(x) \leq 1 \}$, that is, 
\begin{itemize}
	\item $\mu(x) \geq 1$ for any $x \in rA$, and
\item there exists $y \in rA$ such that $\mu(y) = 1$. 
\end{itemize}
Fix a point $y \in rA$ with $\mu(y) = 1$. From the definition of the first passage time, we can easily see that
\begin{equation*}
	\limsup_{n \to \infty} \frac{T(0, nrA)}{n} \leq \mu(y) = 1
\end{equation*}
holds almost surely. We prove that
\begin{equation}
\label{eq:1_liminf}
	1 \leq \liminf_{n \to \infty} \frac{T(0, nrA)}{n}
\end{equation}
holds almost surely. Suppose \eqref{eq:1_liminf} does not hold almost surely. Then there exists $\delta > 0$ with $\mP(\Xi) > 0$, where $\Xi$ is the event defined as 
 \begin{equation}
\label{eq:limsup}
	\Xi :=\left\{ \liminf_{n \to \infty} \frac{T(0, nrA)}{n} \leq 1 - 4\delta \right\}. 
\end{equation}
Consider a configuration in the event $\Xi$. By \eqref{eq:limsup}, we can take a subsequence $\{n_k\}_k$ such that
\begin{equation*}
	\frac{T(0, n_k rA)}{n_k} \leq 1 - 3\delta 
\end{equation*}
holds for all large $k$. From \eqref{eq:point_to_surface}, we can take a sequence $y_1, y_2, \ldots$ of points with $y_k \in n_k rA$ such that
\begin{equation*}
\frac{T(0, y_k)}{n_k} \leq  \frac{T(0, n_k rA)}{n_k} + \delta. 
\end{equation*} 
Thus, we have 
\begin{equation*}
	\frac{T(0, y_k)}{n_k} \leq 1 -2\delta, 
\end{equation*}
which is equivalent to 
\begin{equation*}
	y_k \in B(n_k(1- 2\delta)). 
\end{equation*}
Take $\epsilon > 0$ small enough to satisfy $(1+ \epsilon)(1- 2\delta) \leq (1 - \delta)$. Then, Theorem \ref{thm:shape_theorem} implies that 
\begin{equation*}
	\frac{y_k}{n_k} \in \frac{B(n_k(1- 2\delta))}{n_k(1 - 2\delta)}(1 - 2\delta) \subset (1 - 2\delta)(1 + \epsilon)\mB \subset (1- \delta)\mB 
\end{equation*}
holds for all large $k$ almost surely. This leads to $\mu(y_k/n_k) \leq 1-\delta$. Since $y_k/{n_k} \in rA$, this contradicts the assumption that $rA$ is tangent to $\mB$. \par
From the above discussion, we obtain that 
\begin{align*}
	\mu(y) = \lim_{n \to \infty} \frac{T(0, nrA)}{n} = r \lim_{n \to \infty} \frac{T(0, nA)}{n} 
\end{align*}
holds almost surely. Setting $y^{\prime} := y/r \in A$, we have
\begin{align*}
	\mu(y^{\prime}) =  \lim_{n \to \infty} \frac{T(0, nA)}{n}, 
\end{align*}
which completes the proof of Lemma~\ref{lem:point_to_Affine}.  
\end{proof}

In the next lemma, we compare two passage times $T_1$ and $T$. Note that the following lemma itself does not assume any lattice structures of $X$ and $X_1$. 
\begin{lem}
\rm
\label{lem:probability_lift}
 For any vertex $x_1 \in X_1$ and $t \geq 0$, the inequality 
\begin{equation}
\label{eq:probability_lift}
	\mP(T_1(0, x_1) \geq t) \geq \mP(T(0, \wt x_1) \geq t \text{ for any }\wt x_1 \in \omega^{-1}(x_1)) 
\end{equation}
holds.  
\end{lem}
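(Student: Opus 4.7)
The plan is to use the unique path-lifting property of the covering $\omega$. For each path $\gamma_1=(e_{1,1},\ldots,e_{1,r})$ in $X_1$ from $0$ to $x_1$, let $\pi(\gamma_1)=(e_1,\ldots,e_r)$ be its unique lift to $X$ starting at $0\in X$; this ends at some element of $\omega^{-1}(x_1)$, and $\pi$ is a bijection from $\mathcal P_1:=\{\gamma_1:0\to x_1\text{ in }X_1\}$ onto the set of paths in $X$ from $0$ to lifts of $x_1$. Hence
\[
\min_{\wt x_1\in\omega^{-1}(x_1)}T(0,\wt x_1)=\inf_{\gamma_1\in\mathcal P_1}T(\pi(\gamma_1)),\qquad T_1(0,x_1)=\inf_{\gamma_1\in\mathcal P_1}T_1(\gamma_1),
\]
and both infima may be restricted to simple $\gamma_1$ by non-negativity of the weights. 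For a simple $\gamma_1$ the edges of $\pi(\gamma_1)$ are all distinct in $X$, so $T(\pi(\gamma_1))=\sum_i t^{(0)}_{e_i}$ and $T_1(\gamma_1)=\sum_i t^{(1)}_{e_{1,i}}$ are each a sum of $|\gamma_1|$ i.i.d.\ $\nu$-samples, and thus share the same marginal distribution.

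Since $T_1(0,x_1)$ and $\min_{\wt x_1}T(0,\wt x_1)$ depend on disjoint families of i.i.d.\ $\nu$-weights, the lemma reduces to the marginal inequality $\min T\leq_{\rm st} T_1$. I would prove this by a coupling. Fix a section $\sigma:E_1\to E$ of $\omega$, let $\{\xi_e\}_{e\in E}$ be i.i.d.\ $\nu$ and take these as the weights on $X$, so that $\min T=\inf_{\gamma_1}\sum_i\xi_{e_i^*(\gamma_1)}$, where $e_i^*(\gamma_1)$ denotes the $i$-th edge of $\pi(\gamma_1)$. Since $\sigma$ is injective, the variables $\eta_{e_1}:=\xi_{\sigma(e_1)}$ are i.i.d.\ $\nu$, and $\tilde T_1:=\inf_{\gamma_1}\sum_i\eta_{e_{1,i}}$ has the distribution of $T_1(0,x_1)$. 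It therefore suffices to verify the pointwise bound $\min T\leq\tilde T_1$ in this coupling, after which one passes to marginals.

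The main technical obstacle is establishing $\min T\le\tilde T_1$. In general the edges $\{\sigma(e_{1,i}^\circ)\}_i$ selected by a minimiser $\gamma_1^\circ$ of $\tilde T_1$ do not form a connected walk in $X$, so a direct substitution fails. I would circumvent this by a finite-approximation argument: replace $\mathcal P_1$ by a finite collection $\mathcal Q$ of simple paths, and for each $\mathcal Q$ construct the coupling inductively, using the covering transformation group $G=G(\omega)$ to translate the section-selected edges into a valid lift of some $\gamma_1'\in\mathcal P_1$ starting at $0$ (possibly ending at a different lift of $x_1$), whose lifted weight-sum equals the section sum and therefore bounds $\min T$ from above. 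Taking $\mathcal Q\uparrow\mathcal P_1$ then yields $\min T\le\tilde T_1$, hence the stochastic comparison. Carrying out this re-alignment step, which exploits both the free action of $G$ and the freshness of the remaining $\xi$-weights, is the most delicate part of the argument.
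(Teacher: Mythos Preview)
Your reduction to the stochastic inequality $\min_{\wt x_1}T(0,\wt x_1)\le_{\rm st}T_1(0,x_1)$ is correct, but the coupling you propose does not yield it. The pointwise bound $\min T\le\tilde T_1$ is false for the section coupling. Take $X_1$ to be two vertices $0,x_1$ joined by parallel edges $e_1,e_2$, and $X=\Z$ covering it with $\omega(n)=n\bmod 2$, so that $\pi((e_1))=\{0,1\}$ and $\pi((e_2))=\{-1,0\}$. With the section $\sigma(e_1)=\{0,1\}$, $\sigma(e_2)=\{1,2\}$ and weights $\xi_{\{0,1\}}=\xi_{\{-1,0\}}=10$, $\xi_{\{1,2\}}=1$, all other $\xi$ large, one has $\tilde T_1=\min(10,1)=1$ while every path in $X$ from $0$ to an odd integer has cost at least $10$, so $\min T=10>\tilde T_1$. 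Your ``re-alignment'' cannot repair this: translating an edge by a covering transformation $g\in G$ turns $\sigma(e_{1,i})$ into $g\cdot\sigma(e_{1,i})$, which carries the weight $\xi_{g\cdot\sigma(e_{1,i})}$, not $\xi_{\sigma(e_{1,i})}$. Hence no rearrangement of the section-selected edges into a connected lift can preserve the section sum, and the appeal to ``freshness of the remaining $\xi$-weights'' does not help, since the minimiser $\gamma_1^\circ$ of $\tilde T_1$ depends on all the $\eta$'s simultaneously and cannot be revealed edge by edge against unrevealed $\xi$'s.

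The paper proves the lemma by a correlation argument rather than a pointwise coupling. For finitely many self-avoiding paths $\gamma_1,\dots,\gamma_n$ in $X_1$ and their lifts $\wt\gamma_i$, one observes that the partition of the index set $\{(i,j)\}$ induced by ``$e_{i,j}=e_{i',j'}$ in $X_1$'' is coarser than the one induced by ``$\wt e_{i,j}=\wt e_{i',j'}$ in $X$''. Passing from the coarser to the finer partition amounts to repeatedly replacing one shared variable $t_m$ by two independent copies $t_a,t_b$ inside the increasing events $A=\{T_1(\gamma_i)\ge t\}$. A one-variable FKG step (the paper's Lemma~4.3) shows that each such split can only decrease $\mP(A\cap B)$. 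Iterating and then exhausting by balls in $X_1$ gives \eqref{eq:probability_lift}. The point is that the inequality is fundamentally about positive association of increasing events, not about a monotone coupling through a fixed section; your proposal misses this mechanism.
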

Theorem~\ref{thm:covering_monotonicity} follows from Lemmas~\ref{lem:point_to_Affine} and \ref{lem:probability_lift}. 

\begin{proof}[Proof of Theorem~\ref{thm:covering_monotonicity}]
	Take $x_1 \in \mathcal D_1$ with $\mu_1(x_1)\leq 1$ arbitrarily, and fix $N \in \mathbb{N}$ with $Nx_1 \in \Gamma_1$. Note that $kNx_1 \in X_1$ for any $k= 1, 2, \ldots$, and it follows from \eqref{eq:probability_lift} that
\begin{align*}
\label{eq:Nx_and_line}
	\mP(T_1(0, kNx_1) \geq t) &\geq \mP(T(0, y) \geq t \text{ for any } y \in \omega^{-1}(kNx_1)). 
\end{align*}
The projective relation \eqref{diagram:projection} implies that the right hand side is bounded below by
\begin{align*}
	\mP(T(0, P^{-1}(kNx_1)) \geq t) 
\end{align*}
since any points $y \in \omega^{-1}(kNx_1)$ are in the subspace $P^{-1}(kNx_1)$. By integrating with respect to $t$ from $0$ to $\infty$, we obtain
\begin{equation}
\label{eq:mu1_mu_compare}
 \mE T_1(0, kNx_1) \geq \mE T(0, P^{-1}(kNx_1)).  
\end{equation}
From Lemma~\ref{lem:point_to_Affine}, we can find a point $y \in P^{-1}(x_1)$ such that 
\begin{equation}
\label{eq:find_y}
	 \mu(y) = \lim_{k \to \infty} \frac{T(0, kN P^{-1}(x_1))}{kN}. 
\end{equation}
The sequence $\left(\frac{T(0, kN P^{-1}(x_1))}{kN}\right)_{k\geq 1}$ of random variables is uniformly integrable. Indeed, for some vertex $x \in \omega^{-1}(Nx_1)$, we have 
\begin{equation}
	\frac{T(0, kN P^{-1}(x_1))}{kN} \leq \frac{T(0, kx)}{kN} \leq \frac{1}{kN}\sum_{i=0}^{k-1} T(ix, (i+1)x).   
\end{equation}
From the assumption \eqref{eq:assumption_of_shape_theorem}, the $d$th moment of the right hand side is bounded above by some constant, which does not depend on $k$. Thus by taking expectation of \eqref{eq:find_y}, we obtain
\begin{equation}
\label{eq:DCT}
	\mu(y) = \lim_{k \to \infty}\frac{\mE T(0, kN P^{-1}(x_1))}{kN}. 
\end{equation}
By combining \eqref{eq:DCT} with \eqref{eq:mu1_mu_compare}, we obtain 
\begin{equation}
	1 \geq \mu_1(x_1) = \lim_{k \to \infty}\frac{\mE T_1(0,kNx_1)}{kN} \geq \lim_{k \to \infty}\frac{\mE T(0, kN P^{-1}(x_1))}{kN} = \mu(y),
\end{equation}
which implies $y \in \mB$. Here, the first equality follows from Theorem~\ref{thm:subadditive}. \par
We now obtain $\mB_1 \cap \mathcal D_1 \subset P(\mB)$. Since the projection $P(\mB)$ of the limit shape is closed, the proof of Theorem~\ref{thm:covering_monotonicity} is completed.  
\end{proof}

We next prove Lemma~\ref{lem:probability_lift}. The key idea of the proof is derived from the FKG inequality, which was first introduced by~\cite{FKG}. Here we introduce the statement of the FKG inequality in the context of the FPP model on $X$ and $X_1$. A partial order $\leq$ on the configuration space $\Omega$ is defined as
\begin{equation*}
	{\bf t} \leq {\bf t}^{\prime} \overset{\text{def}}\Longleftrightarrow t_e \leq t_e^{\prime} \text{ for any } e \in E\sqcup E_1
\end{equation*}
for two configurations ${\bf t} = (t_e \,:\, e \in E \sqcup E_1)$, ${\bf t}^{\prime} = (t^{\prime}_e \,:\, e \in E \sqcup E_1 ) \in \Omega$. An event $A$ is called \emph{increasing} if ${\bf t}^{\prime} \in A$ whenever ${\bf t} \in A$ and ${\bf t} \leq {\bf t}^{\prime}$. The simplest form of the FKG inequality is the following. 
\begin{thm}(FKG inequality)
	Let $A$ and $B$ be two increasing events, then
\begin{equation}
\label{eq:FKG_inequality}
	\mP(A \cap B) \geq \mP(A)\mP(B). 
\end{equation}
\end{thm}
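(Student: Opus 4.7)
The plan is to establish the slightly stronger functional correlation inequality $\mE[fg] \geq \mE[f]\mE[g]$ for every pair of bounded measurable $f, g : \Omega \to \R$ that are increasing with respect to the coordinate-wise order on $\Omega$; the stated inequality for events then follows by specializing to $f = \mathbf{1}_A$ and $g = \mathbf{1}_B$. The argument is the classical induction on the number of coordinates on which $f$ and $g$ effectively depend, so I would first reduce to the finite-dimensional case.

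To perform the reduction, fix an enumeration $e_1, e_2, \ldots$ of $E \sqcup E_1$, let $\F_n$ be the $\sigma$-algebra generated by $t_{e_1}, \ldots, t_{e_n}$, and set $f_n := \mE[f \mid \F_n]$, $g_n := \mE[g \mid \F_n]$. By Doob's martingale convergence theorem, $f_n \to f$ and $g_n \to g$ in $L^1$, and since $f$ and $g$ are bounded and coordinate-wise increasing, so are $f_n$ and $g_n$. Hence the finite-dimensional inequality will pass to the limit.

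For the inductive core, the base case $n = 1$ is the key calculation: for increasing $f, g$ defined on a single coordinate $[0, \infty)$ and any $s, t \geq 0$,
\[
(f(s) - f(t))\bigl(g(s) - g(t)\bigr) \geq 0,
\]
since both factors share a common sign. Integrating against $\nu \otimes \nu$ and expanding yields $2\mE[fg] - 2\mE[f]\mE[g] \geq 0$. For the inductive step from $n-1$ to $n$, I condition on $t_{e_1}$: the maps $F(t_{e_1}) := \mE[f \mid t_{e_1}]$ and $G(t_{e_1}) := \mE[g \mid t_{e_1}]$ are themselves increasing in $t_{e_1}$ by coordinate-wise monotonicity, the inductive hypothesis applied fiber-wise yields $\mE[fg \mid t_{e_1}] \geq F(t_{e_1}) G(t_{e_1})$, and a final application of the base case to $(F, G)$ on the single coordinate $t_{e_1}$ gives
\[
\mE[fg] \geq \mE[FG] \geq \mE[F]\mE[G] = \mE[f]\mE[g].
\]

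The argument is essentially routine, and no substantial obstacle is anticipated; the only delicate point is the passage from finitely to infinitely many coordinates, which rests on Doob's theorem together with the observation that $L^1$-limits of coordinate-wise increasing functions remain coordinate-wise increasing (checked on a dense countable family of comparable configurations and extended by continuity of conditional expectation).
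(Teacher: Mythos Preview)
The paper does not actually prove this theorem: it is stated as a classical result, with a citation to \cite{FKG}, and is then used as a tool (and as motivation for Lemma~\ref{lem:corelation_lift}). So there is no ``paper's own proof'' to compare against.

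Your argument is correct and is precisely the standard Harris proof of the correlation inequality for product measures: the one-dimensional Chebyshev-type identity $(f(s)-f(t))(g(s)-g(t))\geq 0$ integrated against $\nu\otimes\nu$, followed by induction via conditioning on one coordinate, and a martingale approximation to pass from cylinder functions to general bounded increasing functions. One small remark: your final parenthetical is slightly tangled. You do not need to argue that ``$L^1$-limits of increasing functions remain increasing''; rather, what you need (and what follows immediately from the product structure of $\mP$) is that each $f_n=\mE[f\mid\F_n]$ is itself coordinate-wise increasing, since it is obtained from the increasing $f$ by integrating out the remaining coordinates against a product measure. The passage to the limit then only requires $\mE[f_ng_n]\to\mE[fg]$, which holds by bounded convergence once $f_n\to f$ and $g_n\to g$ in $L^1$.
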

We remark that the right hand side of \eqref{eq:FKG_inequality} can be regarded as the probability $\mP(A^{\prime}\cap B^{\prime})$ of the intersection $A^{\prime}\cap B^{\prime}$, where $A^{\prime}, B^{\prime}$ are independent copies of $A, B$. Here, we can roughly expect that the probability $\mP(A \cap B)$ decreases as the correlation of $A$ and $B$ decreases. For two paths $\gamma_1, \gamma_2$ in $X_1$ and their liftings $\wt \gamma_1, \wt \gamma_2$, the ``correlation'' of $\wt \gamma_1, \wt \gamma_2$ is less than that of $\gamma_1, \gamma_2$. Thus, by regarding the event $\{T_1(0, x_1) \geq t\}$ as the intersection $\bigcap_{\gamma}\{T(\gamma) \geq t\}$ of the events $\{T(\gamma) \geq t\}$ for all paths from 0 to $x_1$, and comparing it with $\bigcap_{\gamma}\{T(\wt \gamma) \geq t\}$, we can give a proof of Lemma~\ref{lem:probability_lift}.  \par
For a rigorous proof of Lemma~\ref{lem:probability_lift}, we require one more lemma. Let $A$ and $B$ be two increasing events which depend only on the family $(t_1, t_2, \ldots, t_m)$ of i.i.d random  variables with the distribution $\nu$. Let $t_a$ and $t_b$ be independent copies of $t_m$. We define $A^{\prime}$ (resp. $B^{\prime}$) as the event which is obtained from $A$ (resp. $B$) by replacing $t_m$ with $t_a$ (resp. $t_b$). Then the following holds. 
\begin{lem}
\rm
\label{lem:corelation_lift} 
\begin{equation}
\label{eq:corelation_lift}
\mP(A \cap B) \geq \mP(A^{\prime} \cap B^{\prime}). 
\end{equation}
\end{lem}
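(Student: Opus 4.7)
The plan is to condition on the sub-$\sigma$-algebra $\mathcal G$ generated by $(t_1, \ldots, t_{m-1})$ and reduce the inequality to the elementary one-variable correlation inequality for monotone functions. The intuition is that $A$ and $B$ are positively correlated precisely because they share the coordinate $t_m$, and replacing $t_m$ by the two independent copies $t_a$, $t_b$ to form $A^{\prime}$, $B^{\prime}$ removes exactly that source of correlation, so $\mP(A^{\prime}\cap B^{\prime})$ should be smaller than $\mP(A\cap B)$.

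First I would fix $(t_1, \ldots, t_{m-1})$ and observe that both $A$ and $B$ then become upward-closed subsets of $[0, \infty)$ in the remaining variable $t_m$, since they are increasing in the full configuration. Consequently, their conditional probabilities are non-decreasing functions of $t_m$ given $\mathcal G$. The Chebyshev correlation inequality --- proved by integrating $(f(t_m) - f(t_m^{\prime}))(g(t_m) - g(t_m^{\prime})) \geq 0$ against an independent copy $t_m^{\prime}$ for non-decreasing $f$, $g$ --- then yields
\begin{equation*}
\mP(A \cap B \mid \mathcal G) \geq \mP(A \mid \mathcal G) \, \mP(B \mid \mathcal G)
\end{equation*}
almost surely.

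Next I would rewrite the right-hand side as $\mP(A^{\prime} \cap B^{\prime} \mid \mathcal G)$. Since $t_a$ and $t_b$ are independent copies of $t_m$ and are independent of $\mathcal G$, we have $\mP(A^{\prime} \mid \mathcal G) = \mP(A \mid \mathcal G)$ and $\mP(B^{\prime} \mid \mathcal G) = \mP(B \mid \mathcal G)$. Because $A^{\prime}$ depends on $t_a$ and $B^{\prime}$ depends on $t_b$, they are conditionally independent given $\mathcal G$, giving
\begin{equation*}
\mP(A^{\prime} \cap B^{\prime} \mid \mathcal G) = \mP(A \mid \mathcal G) \, \mP(B \mid \mathcal G).
\end{equation*}
Combining the two displays and taking expectations over $\mathcal G$ yields \eqref{eq:corelation_lift}.

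The argument contains no serious obstacle; the only care required is to work on an enlarged probability space carrying $(t_1, \ldots, t_m, t_a, t_b)$ as an i.i.d. family so that the conditional independence used in the last step is formally meaningful. Once that is arranged, the proof reduces to a single application of the monotone correlation inequality in one real variable.
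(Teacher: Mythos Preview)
Your proof is correct and follows essentially the same route as the paper: condition on $(t_1,\ldots,t_{m-1})$, establish the one-variable correlation inequality for the remaining coordinate, identify the product with $\mP(A'\cap B'\mid\mathcal G)$ via independence of $t_a,t_b$, and integrate. The only cosmetic difference is that the paper justifies the one-dimensional step by observing that increasing subsets of $[0,\infty)$ are totally ordered by inclusion (so $\mP_{\{m\}}(A\cap B)=\min\{\mP_{\{m\}}(A),\mP_{\{m\}}(B)\}\geq \mP_{\{m\}}(A)\mP_{\{m\}}(B)$), whereas you invoke the Chebyshev correlation inequality; both are equally valid.
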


\begin{proof}
Let $[m]:= \{1,2, \ldots, m\}$. We denote by $\mP_{\bullet}:= \nu^{\otimes \bullet}$ the product measure on $[0, \infty)^{\bullet}$. Since $A$ and $B$ do not depend on $t_a, t_b$, we can identify these events with the Borel subsets of $[0, \infty)^{[m]}$, and we have
\begin{align}
&\mP(A \cap B) \\
&= \int_{[0, \infty)^{[m]}} I_{A \cap B} d\mP_{[m]} \\
&= \int_{[0, \infty)^{[m-1]}}\int_{[0, \infty)^{\{m\}}} I_{A \cap B} d\mP_{\{m\}} d\mP_{[m-1]} \\
&= \int_{[0, \infty)^{[m-1]}}
   \mP_{\{m\}}\big(A_{(t_1, \ldots, t_{m-1})} \cap B_{(t_1, \ldots, t_{m-1})}\big)d\mP_{[m-1]}. 
\label{eq:Fubini_conditioning}
\end{align}
 Here, $I$ is the indicator function and we denote by $A_{(t_1, \ldots, t_{m-1})}$ the set of $t_m \in [0, \infty)^{\{m\}}$ with $I_A(t_1, \ldots, t_m) = 1$. We easily see
\begin{equation}
\label{eq:prob=probprob}
	\mP_{\{m\}}\big(A_{(t_1, \ldots, t_{m-1})} \cap B_{(t_1, \ldots, t_{m-1})}\big)
	\geq \mP_{\{m\}}\big(A_{(t_1, \ldots, t_{m-1})}\big)\mP_{\{m\}}\big(B_{(t_1, \ldots, t_{m-1})}\big). 
\end{equation}
Indeed, since both $A_{(t_1, \ldots, t_{m-1})}$ and $B_{(t_1, \ldots, t_{m-1})}$ are increasing subsets of the half line $[0, \infty)^{\{m\}}$, one of them is included in the other. Thus the left hand side of \eqref{eq:prob=probprob} is equal to one of the two  probabilities of the right hand side. \par
We identify the events $A^{\prime}$, $B^{\prime}$ with the Borel subsets of $[0, \infty)^{[m-1]\sqcup \{a, b\}}$. Define $A^{\prime}_{(t_1, \ldots, t_{m-1})}$, $B^{\prime}_{(t_1, \ldots, t_{m-1})}$ as the sets of $(t_a, t_b) \in [0, \infty)^{\{a, b\}}$ with $I_{A^{\prime}}(t_1, \ldots, t_{m-1}, t_a, t_b) = 1$, $I_{B^{\prime}}(t_1, \ldots, t_{m-1}, t_a, t_b) = 1$, respectively. Then the right hand side of \eqref{eq:prob=probprob} is equal to   
\begin{equation}
\label{eq:measure_product}
 \mP_{\{a, b\}}\big(A^{\prime}_{(t_1, \ldots, t_{m-1})}\big)\mP_{\{a, b\}} \big(B^{\prime}_{(t_1, \ldots, t_{m-1})}\big) = \mP_{\{a, b\}}\big(A^{\prime}_{(t_1, \ldots, t_{m-1})} \cap B^{\prime}_{(t_1, \ldots, t_{m-1})}\big).  
\end{equation}
By combining \eqref{eq:Fubini_conditioning} with \eqref{eq:prob=probprob} and \eqref{eq:measure_product}, we obtain
\begin{align}
\mP(A \cap B) 
&\geq \int_{[0, \infty)^{[m-1]}}
  \mP_{\{a, b\}}\big(A^{\prime}_{(t_1, \ldots, t_{m-1})} \cap B^{\prime}_{(t_1, \ldots, t_{m-1})}\big) d\mP_{[m-1]} \\
&= \int_{[0, \infty)^{[m-1]}}\int_{[0, \infty)^{\{a, b\}}}
  I_{A^{\prime}\cap B^{\prime}}d\mP_{\{a, b\}} d\mP_{[m-1]} \\
&=\mP(A^{\prime} \cap B^{\prime}), 
\end{align}
which completes the proof of Lemma \ref{lem:corelation_lift}. 
\end{proof}
We now turn to the proof of Lemma~\ref{lem:probability_lift}. 
\begin{proof}[Proof of Lemma~\ref{lem:probability_lift}]
Let $\gamma_1, \ldots \gamma_n$ be arbitrary self-avoiding paths in $X_1$ from the origin $0$ to $x_1$, and let $\wt \gamma_1, \ldots, \wt \gamma_n$ in $X$ be their liftings. We first show 
\begin{equation}
\label{eq:n_paths_lifting}
	\mP\left(\bigcap_{i=1}^n\{T_1(\gamma_i)\geq t \}\right) \geq \mP\left(\bigcap_{i=1}^n\{T(\wt \gamma_i)\geq t \}\right)
\end{equation}
for any $t \geq 0$. We set $\gamma_i = (e_{i, 1}, \ldots, e_{i, r_i})$ and $\wt \gamma_i = (\wt e_{i, 1}, \ldots, \wt e_{i, r_i})$ for $i = 1,2, \ldots, n$. Note that $\wt e_{i, j} \in E$ is mapped to $e_{i, j} \in E_1$ by the covering map $\omega:X \rightarrow X_1$. \par
Let $\mathcal I:= \{(i, j)\,:\, i=1,2,\ldots, n \text{ and } j=1,2,\ldots, r_i\}$ be the index set. For a partition $\mathcal S=\{S_1, \ldots, S_m\}$ of $\mathcal I$, we denote by $\pi_{\mathcal S}: \mathcal I \rightarrow \mathcal S$ the canonical map, which is defined by $\pi_{\mathcal S}(i, j) = S_k$ when $(i, j) \in S_k$. We define the probability $\mP^{(\mathcal S)} \in [0, 1]$ by
\begin{equation}
\label{eq:def_of_prob}
	\mP^{(\mathcal S)} := \mP\left(\bigcap_{i=1}^{n} \left\{\sum_{j=1}^{r_i} t_{\pi_{\mathcal S}(i,j)}\geq t\right\}\right), 
\end{equation}
where $(t_S\,:\, S \in \mathcal S)$ is the $\mathcal S$-indexed family of i.i.d random variables with the distribution $\nu$. 
\par
We set the partition $\mathcal S$ (resp. $\wt{\mathcal S}$) of $\mathcal I$ by the equivalence relation
\begin{align}
	&(i, j) \sim (i^{\prime}, j^{\prime}) \overset{\text{def}}\Longleftrightarrow 
	e_{i,j} = e_{i^{\prime}, j^{\prime}} \\(\text{resp. } &(i, j) \sim (i^{\prime}, j^{\prime}) \overset{\text{def}}\Longleftrightarrow 
	\wt e_{i,j} = \wt e_{i^{\prime}, j^{\prime}}). 
\end{align}
The inequality \eqref{eq:n_paths_lifting} can be rewritten as 
\begin{equation}
\label{eq:partition_probability}
	\mP^{(\mathcal S)} \geq \mP^{(\wt{\mathcal S})}. 
\end{equation}
Since $\wt e_{i,j} = \wt e_{i^{\prime}, j^{\prime}}$ implies $e_{i,j} = e_{i^{\prime}, j^{\prime}}$, the partition $\wt {\mathcal S}$ is finer than $\mathcal S$. We show that the further division of the partition $\mathcal S$ decreases the probability \eqref{eq:def_of_prob}. \par
Let $\mathcal S^{\prime}$ be a partition obtained from $\mathcal S := \{S_1, \ldots, S_m\}$ by splitting some element, say $S_m \in \mathcal S$, into two nonempty subsets $S_a$, $S_b$. For the $\mathcal S$-indexed family $(t_S\,:\, S \in \mathcal S)$ of i.i.d random variables, we take two independent copies $t_{S_a}$, $t_{S_b}$ of $t_{S_m}$ and obtain the $\mathcal S^{\prime}$-indexed family $(t_S\,:\, S \in \mathcal S^{\prime})$. Let $p: \mathcal I \rightarrow \{1,2,\ldots, n\}$ be the projection defined by $(i, j)\mapsto i$. We write the event in \eqref{eq:def_of_prob} as the intersection $A \cap B$ of the two events $A$, $B$, where 
\begin{equation}
A = \bigcap_{i \in p(S_a)} \left\{\sum_{j=1}^{r_i} t_{\pi_{\mathcal S}(i,j)}\geq t\right\}
\text{ and }
 B = \bigcap_{i \notin p(S_a)} \left\{\sum_{j=1}^{r_i} t_{\pi_{\mathcal S}(i,j)}\geq t\right\}.  
\end{equation}
We also set
\begin{equation}
A^{\prime} = \bigcap_{i \in p(S_a)} \left\{\sum_{j=1}^{r_i} t_{\pi_{{\mathcal S^{\prime}}}(i,j)}\geq t\right\}
\text{ and }
B^{\prime} = \bigcap_{i \notin p(S_a)} \left\{\sum_{j=1}^{r_i} t_{\pi_{{\mathcal S^{\prime}}}(i,j)}\geq t\right\}.  
\end{equation}
Here, we note that the event $A^{\prime}$ does not depends on $t_{S_b}$. Indeed, the assumption that each $\gamma_i$ is self-avoiding implies that the restriction $p_{\restriction_{S_k}}$ of the map $p$ to some element $S_k \in \mathcal S$ is injective. Since $S_a$ and $S_b$ are disjoint subsets of the same element $S_m \in \mathcal S$, we have $p(S_a)\cap p(S_b) = \emptyset$. 
\par
Therefore, the event $A^{\prime}$ (resp. $B^{\prime}$) can also be obtained from $A$ (resp. $B$) by replacing $t_{S_m}$ with $t_{S_a}$ (resp. $t_{S_b}$). From Lemma~\ref{lem:corelation_lift}, we obtain
\begin{align}
\label{eq:first_step}
	 \mP^{(\mathcal S)} \geq \mP^{(\mathcal S^{\prime})}.  
\end{align}
We can take a finite sequence $\mathcal S = \mathcal S^{(0)}, \mathcal S^{(1)}, \ldots, \mathcal S^{(K)} = \wt{\mathcal S}$ of partitions of $\mathcal I$ such that $\mathcal S^{(k+1)}$ is obtained from $\mathcal S^{(k)}$ by splitting some element of $\mathcal S^{(k)}$ into two nonempty sets. By replacing $\mathcal S$, $\mathcal S^{\prime}$ with $\mathcal S^{(k)}$, $\mathcal S^{(k+1)}$ and iterating the above discussion for $k=0, 1, \ldots, K-1$, we obtain \eqref{eq:partition_probability}. This completes the proof of \eqref{eq:n_paths_lifting}. \par
We next show \eqref{eq:probability_lift} from \eqref{eq:n_paths_lifting}. Let $\Lambda_R$ be the ball  with radius $R$: 
\begin{equation*}
	\Lambda_R = \{y_1 \in X_1 \,:\, d_{X_1}(0, y_1) \leq R \}, 
\end{equation*}
where $d_{X_1}$ is the graph metric. Letting $R$ be sufficiently large that $\Lambda_R$ includes $x_1$, we set the restricted first passage time
\begin{equation*}
	T_1^{R}(0, x_1) := \inf \{ T_1(\gamma) \,:\, \gamma \mbox{ is a path in $\Lambda_R$ from $0$ to $x_1$} \}. 
\end{equation*}
Let $\{\gamma_1, \gamma_2, \ldots, \gamma_n\}$ be the finite set of all self-avoiding paths in $\Lambda_R$ that go from $0$ to $x_1$. Then we have
\begin{equation*}
	T_1^{R}(0, x_1) := \min_{i=1, \ldots, n} T_1(\gamma_i),\end{equation*}
and it follows from \eqref{eq:n_paths_lifting} that 
\begin{align*}
	\mP(T_1^{R}(0, x_1) \geq t) 
= \mP\left(\bigcap_{i=1}^n \{T_1(\gamma_i) \geq t\} \right) \geq \mP\left(\bigcap_{i=1}^n \{T(\wt  \gamma_i) \geq t\} \right), 
\end{align*}
where each $\wt \gamma_i$ is the lifting of $\gamma_i$. Since the terminus of each path $\wt \gamma_i$ is in $\omega^{-1}(x_1)$, the last expression is bounded below by
\begin{equation*}
	\mP(T(0, \wt x_1) \geq t \text{ for any }\wt x_1 \in \omega^{-1}(x_1)). 
\end{equation*}
Letting $R \longrightarrow \infty$ completes the proof of Lemma~\ref{lem:probability_lift}. 
\end{proof}

\section{Conclusion}
We introduced a general version of the FPP model defined on crystal lattices, and showed the covering monotonicity of the limit shape. Then, in light of the result of strict monotonicity in~\cite{Strict_monotonicity}, the following problem is of interest for further study.  
\begin{itemize}
	\item Is the inclusion $\mB_1 \subset P(\mB)$ of Theorem~\ref{thm:covering_monotonicity} strict?
\end{itemize}
Another main interest is derived from the formulation of crystal lattices. As mentioned in Sect.~\ref{sec:introduction}, one of the main studies of crystal lattices comes from the concept of \emph{standard realization}, periodic realization with the maximal symmetry. The cubic lattice that we often consider in percolation theory is one example. The paper~\cite{Simulation} provides a simulation study of the FPP model on a $2$-dimensional cubic lattice, and observes that the larger the variability of the time distribution is, the closer the limit shape becomes to the circle. We thus pose the following question: 
\begin{itemize}
	\item For the standard realization $\Phi:X \rightarrow \R^d$ of any  crystal lattice $X$, does the limit shape become closer to the sphere as the variance of time distribution increases? 
\end{itemize}

\section*{Acknowledgement}
The author would like to thank Masato Takei and Shuta Nakajima for their valuable suggestions and comments. This work was supported by JSPS KAKENHI Grant Number 19J20795.

\appendix
\section{Appendix}

\subsection{Tree-lifting property}
\label{subsec:tree_lifting}
Let $\omega: X \rightarrow X_0$ be a regular covering graph over a finite graph $X_0$.  
Let $\mT_0$ be a spanning tree of $X_0$. Fix two vertices $x_0 \in X_0$ and $x \in X$ with $\omega(x) = x_0$. Then there exists a unique subtree $\mT \subset X$, which we call a  \emph{lifting} of $\mT_0$, satisfying $x \in \mT$ and the restriction $\omega_{\restriction_{\mT}}: \mT \rightarrow \mT_0$ of $\omega$ is an isomorphism. Indeed, we can construct $\mT$ as
\begin{equation*}
	\mT := \bigcup_{i} \gamma_i,
\end{equation*}
where each path $\gamma_i$ is the lifting of the unique path in $\mT_0$ from $x_0$ to each leaf $y_0^i$. The uniqueness follows from the unique path-lifting property. For this tree $\mT$ and $\sigma \in G(\omega)$, the translation $\sigma(\mT)$, denoted by $\mT_{\sigma}$, is the unique lifting of $\mT_0$ containing $\sigma x$. The following proposition states that the vertex set of a covering graph can be represented as the array of a spanning tree of $X_0$. 
\begin{prp}
\rm
\label{prp:disjoint_trees}
Let $\omega: X = (V, E) \rightarrow X_0$ be a regular covering over a finite graph $X_0$. Fix a spanning tree $\mT_0 \subset X_0$ and its lifting $\mT \subset X$. Then the following holds:
\begin{equation*}
	V = \bigsqcup_{\sigma \in G(\omega)} \mT_{\sigma}, 
\end{equation*}
where $G(\omega)$ is the covering transformation group of $\omega$. 
\end{prp}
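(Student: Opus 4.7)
The plan is to verify the two set-theoretic claims separately: coverage $V = \bigcup_{\sigma \in G(\omega)} \mT_\sigma$ and pairwise disjointness $\mT_\sigma \cap \mT_{\sigma'} = \emptyset$ for $\sigma \ne \sigma'$. Both follow from combining the isomorphism property of the lifted tree with standard facts about regular coverings recalled in Section~\ref{subsec:covering_graphs}, namely that $G(\omega)$ acts freely on $V$ and that for any two points in the same fiber there is a covering transformation carrying one to the other.

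For coverage, I would take an arbitrary vertex $v \in V$. Since $\mT_0$ is a \emph{spanning} tree of $X_0$, we have $\omega(v) \in \mT_0$. Because the restriction $\omega_{\restriction_\mT}:\mT \to \mT_0$ is an isomorphism, there is a unique $w \in \mT$ with $\omega(w) = \omega(v)$. Regularity of $\omega$ then supplies some $\sigma \in G(\omega)$ with $\sigma w = v$, giving $v \in \sigma(\mT) = \mT_\sigma$. This shows every vertex lies in at least one translate of $\mT$.

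For disjointness, suppose $v \in \mT_\sigma \cap \mT_{\sigma'}$, so $v = \sigma w = \sigma' w'$ for some $w, w' \in \mT$. Applying $\omega$ and using $\omega \circ \sigma = \omega \circ \sigma' = \omega$, we get $\omega(w) = \omega(w')$; since $\omega_{\restriction_\mT}$ is injective, $w = w'$. Consequently $(\sigma')^{-1}\sigma$ fixes the vertex $w$, and by the freeness of the action $G(\omega) \curvearrowright V$ we conclude $\sigma = \sigma'$.

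The argument is essentially bookkeeping, so I do not expect a real obstacle; the only subtlety to flag is making sure the hypotheses used—(i) $\mT_0$ spans $X_0$, (ii) $\omega_{\restriction_\mT}$ is an isomorphism onto $\mT_0$, (iii) $\omega$ is regular, and (iv) $G(\omega)$ acts freely—are each invoked exactly where needed, since dropping any one of them breaks either surjectivity onto $V$ or the injectivity of the union. With these in place the decomposition $V = \bigsqcup_{\sigma \in G(\omega)} \mT_\sigma$ is immediate.
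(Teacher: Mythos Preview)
Your proof is correct and matches the paper's argument essentially line for line on the coverage part; for disjointness the paper simply appeals to the uniqueness of the lifted tree, while you unpack this via the freeness of the $G(\omega)$-action, which is the same underlying fact made explicit.
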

\begin{proof}
Take $y \in V$ arbitrarily. For $y_0:= \omega(y)$, we can find $y^{\prime} \in \mT$ with $\omega(y^{\prime}) = y_0$ since $\omega_{\restriction_{\mT}}$ is a bijection.  From the regularity of $\omega$, there exists $\sigma \in G(\omega)$ such that $y = \sigma(y^{\prime})$, which implies $y \in \mT_{\sigma}$. Thus $V \subset \cup_{\sigma \in G(\omega)} \mT_{\sigma}$. The disjointness of the right hand side follows from the uniqueness of the lifting tree. 
\end{proof}

\subsection{Proof of the shape theorem}
In this subsection, we give the proof of \eqref{eq:convergence_assumption}. If $d=1$, then \eqref{eq:convergence_assumption} directly follows from Proposition~\ref{prp:time_constant}. For $d \geq 2$, we only need to check the following stochastic estimate, which is an analogue of \cite[Lemma 2.20]{FPP_history}, for the passage time $T(x, y)$. 
\begin{lem}
\rm
\label{lem:kappa}
Suppose $d \geq 2$ and the time distribution $\nu$ satisfies \eqref{eq:assumption_of_shape_theorem}. Then there exists a constant $\kappa < \infty$ such that
\begin{align*}
	\mathbb{P} \left(\sup_{x \in X, x \neq 0} \frac{T(0, x)}{\|x\|_1} < \kappa \right) > 0.
\end{align*}
\end{lem}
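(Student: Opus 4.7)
The plan is to mimic the classical proof of Lemma~2.20 in \cite{FPP_history} (originally due to Kesten for the cubic case) and adapt it to crystal lattices by exploiting the $\Gamma$-periodicity of the realization. The overall strategy is a block-renormalization argument driven by the $d$-th moment condition \eqref{eq:assumption_of_shape_theorem} on $\min(t_1,\ldots,t_{l_X})$ together with the edge connectivity $l_X$ of $X$.

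First I would fix a basis $(a_1,\ldots,a_d)$ of the lattice group $\Gamma = \rho(L)$ and partition $\R^d$ into parallelepiped blocks $B_{\mathbf k}$ indexed by $\mathbf k \in \Z^d$, of side-length parameter $K$. By periodicity of $X$ under $\Gamma$ and finiteness of the base graph $X_0$, the random graph structure inside $B_{\mathbf k}$ is a $\Gamma$-translate of that inside $B_{\mathbf 0}$, so the blocks form a stationary, finite-range-dependent lattice. I would declare $B_{\mathbf k}$ to be \emph{good} at level $M$ when, for each pair of opposite faces of $B_{\mathbf k}$, there is a path (through $X$, staying essentially inside $B_{\mathbf k}$) between them with passage time $\leq M$. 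Using edge connectivity $l_X$ and Menger's theorem, one obtains $l_X$ edge-disjoint candidate crossings for each face pair, so
\begin{align*}
\mP(B_{\mathbf k} \text{ is bad}) \leq \mP\!\left(\min(T_1,\ldots,T_{l_X}) > M\right),
\end{align*}
where each $T_i$ is a sum of $O(K)$ i.i.d.\ edge-times. As $M \to \infty$ this probability vanishes (since each $T_i$ is almost surely finite), so for fixed $K$ and $M$ sufficiently large the good blocks stochastically dominate a supercritical Bernoulli percolation by the Liggett--Schonmann--Stacey domination theorem.

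On the positive-probability event where good blocks form an infinite cluster containing a block near $0$, concatenating block crossings along an $\ell_1$-optimal route in the block lattice produces a path from $0$ to $x$ of passage time $O(M\cdot\|x\|_1/K)$, apart from detours through bad blocks. The $d$-th moment condition \eqref{eq:assumption_of_shape_theorem} enters in controlling the total cost of these detours: at block-distance $n$ there are $O(n^{d-1})$ candidate blocks, and a union bound over shells requires moment control of order $d$ on the time-cost of a bad block in order to obtain a uniform linear-in-$\|x\|_1$ bound. The main obstacle is precisely this last step: verifying that the goodness events have finite range of dependence in a general crystal lattice, and that the bad-block detour costs are simultaneously controlled across all shells. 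Both are standard in $\bbL^d$ but require careful attention to the combinatorics of the fundamental domain and to the treatment of edges that straddle block boundaries, since the realization $\Phi$ need not align nicely with the block partition.
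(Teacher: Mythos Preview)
Your proposal is \emph{not} the argument of \cite[Lemma~2.20]{FPP_history}, and it is not the argument the paper gives either. Both the paper and \cite{FPP_history} proceed by first reducing the statement to the Borel--Cantelli estimate
\[
\sum_{x\in X}\mP\bigl(T(0,x)\ge C\|x\|_1\bigr)<\infty
\]
for some $C$, and then proving this sum is finite by a direct moment argument: one identifies $L\cong\Z^d$, chooses $R$ so that for each $\sigma\sim 0_{\Z^d}$ there are $l_X$ edge-disjoint paths from $0$ to $(2R\sigma)0$, and sets $T(\sigma,\tau)$ to be the minimum of their passage times. Since $d\ge2$, the moment condition gives finite \emph{variance} for each $T(\sigma,\tau)$, and Chebyshev yields $\mP\bigl(T(\pi)\ge(\mE_{\max}+1)m\bigr)\le 3\V_{\max}/m$ for a length-$m$ path $\pi$ in $\Z^d$. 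The key step is then to run $2d$ vertex-disjoint paths in $\Z^d$ from $0_{\Z^d}$ to $\sigma$, strip off their first and last steps, and use independence of the interiors to multiply the Chebyshev bounds, obtaining a summable $|\sigma|^{-2d}$ tail. The full $d$-th moment is used only at the very end, to control the local correction $S(\sigma)=\max\{T((2R\sigma)0,x):x\text{ in the box at }\sigma\}$ via $\sum_\sigma\mP(S(0_{\Z^d})\ge C_1|\sigma|)<\infty$.

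Your block-renormalization/LSS route is a genuinely different strategy, and something like it can be made to work, but as written it has real gaps. First, your bad-block bound $\mP(B_{\mathbf k}\text{ bad})\le\mP(\min(T_1,\ldots,T_{l_X})>M)$ is not justified: edge connectivity $l_X$ gives $l_X$ edge-disjoint paths between two \emph{fixed} vertices, not $l_X$ edge-disjoint face-to-face crossings of a block, and you have omitted the union bound over face pairs. Second, and more seriously, ``good'' as you define it (existence of cheap opposite-face crossings) does not guarantee that crossings of adjacent good blocks actually meet, so concatenation is not automatic; one usually needs a stronger goodness condition involving connections within the block. Third, reaching an arbitrary vertex $x$---not just a block boundary---still requires bounding the local time from a good-block entry point to $x$ itself, uniformly over all $x$; this is exactly the $S(\sigma)$-type Borel--Cantelli sum the paper handles with the $d$-th moment, and your sketch does not address it. Finally, the linear chemical-distance bound in the good-block cluster that you implicitly invoke is itself a nontrivial input. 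The paper's $2d$-paths/Chebyshev argument avoids all of these issues and is both shorter and closer to what you said you intended to do.
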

For the remaining discussion for the proof of \eqref{eq:convergence_assumption}, we refer to the proof of \cite[Theorem~2.16]{FPP_history}. Although the outline of the proof of Lemma~\ref{lem:kappa} is similar to that of \cite[Lemma 2.20]{FPP_history}, we need to consider the array of trees in $X$ as an analogy for the graph structure of the cubic lattice.
\begin{proof}[proof of Lemma A.1]
We first check that Lemma \ref{lem:kappa} follows from the following estimate 
\begin{equation}
\label{eq:sum_all_vertices}
	\sum_{x \in X} \mP(T(0, x) \geq C\|x\|_1) < \infty
\end{equation}
for some constant $C > 0$. Let $\{x_1, x_2, \ldots \}$ be an ordering of the vertex set $X\setminus \{0\}$. From \eqref{eq:sum_all_vertices}, we have
\begin{equation*}
	\mP\left(\bigcup_{n \geq N} \{T(0, x_n) \geq C\|x_n\|_1\}\right) \leq 
	\sum_{n\geq N} \mP(T(0, x_n) \geq C\|x_n\|_1) < 1/3
\end{equation*}
for large $N$. By taking the complement, we obtain
\begin{equation*}
	\mP\left(\sup_{n \geq N}\frac{T(0, x_n)}{\|x_n\|_1} \leq C\right) > 2/3, 
\end{equation*}
and we can find $\kappa^{\prime} > 0$ satisfying 
\begin{equation*}
	\mP\left(\max_{n =1, \ldots, N-1}\frac{T(0, x_n)}{\|x_n\|_1} \leq \kappa^{\prime}\right) > 2/3. 
\end{equation*}
Letting $\kappa := \max\{\kappa^{\prime}, C\}$ implies Lemma~\ref{lem:kappa}. \par
We now turn to the proof of \eqref{eq:sum_all_vertices}. Let us recall the discussion of Sect.~\ref{subsec:tree_lifting}. From Proposition~\ref{prp:disjoint_trees}, the vertex set $V$ of $X$ can be divided into the liftings of a spanning tree of the base graph $X_0$:
\begin{align*}
	V = \bigsqcup_{z \in \Z^d} \mT_z. 
\end{align*}
Here, we identify the free abelian group $L$ with $\Z^d$ by taking some $\Z$-basis of $L$. We denote by $0_{\Z^d}$ the identity element of $\Z^d$ in order to distinguish from the origin $0 \in X$. We denote by $|\sigma|$ the $L_1$-norm of $\sigma \in \Z^d$, and let $\sigma \sim \tau$ mean $|\sigma - \tau| = 1$. Let $l_X$ be the edge connectivity of $X$. For a number $R \in \mathbb{N}_{>0}$ and $\sigma \in \Z^d$, we define a box $\Lambda(\sigma) \subset \Z^d$ as
\begin{equation*}
	\Lambda(\sigma) := 2R\sigma + (-R, R]^d \cap \Z^d. 
\end{equation*}
We fix $R$ sufficiently large so that the following condition holds (Fig.~\ref{fig:box_covering}): for any action $\sigma \sim 0_{\Z^d}$, there exist $l_X$ edge-disjoint paths $\gamma^{(\sigma)}_1,\ldots,\gamma^{(\sigma)}_{l_X}$ satisfying that 
\begin{itemize}
	\item all vertices of the paths are in $\bigsqcup_{z \in \Lambda\big(0_{\Z^d}\big) \cup \Lambda(\sigma)} {\mathcal T}_z$; and
	\item each path connects $0 \in X$ and $(2R\sigma)0 \in X$. 
\end{itemize}
The existence of this $R$ follows from the periodic structure of the graph $X$.  
\begin{figure}[H]
\captionsetup{width=0.85\linewidth}
\centering
\includegraphics[width = 5cm]{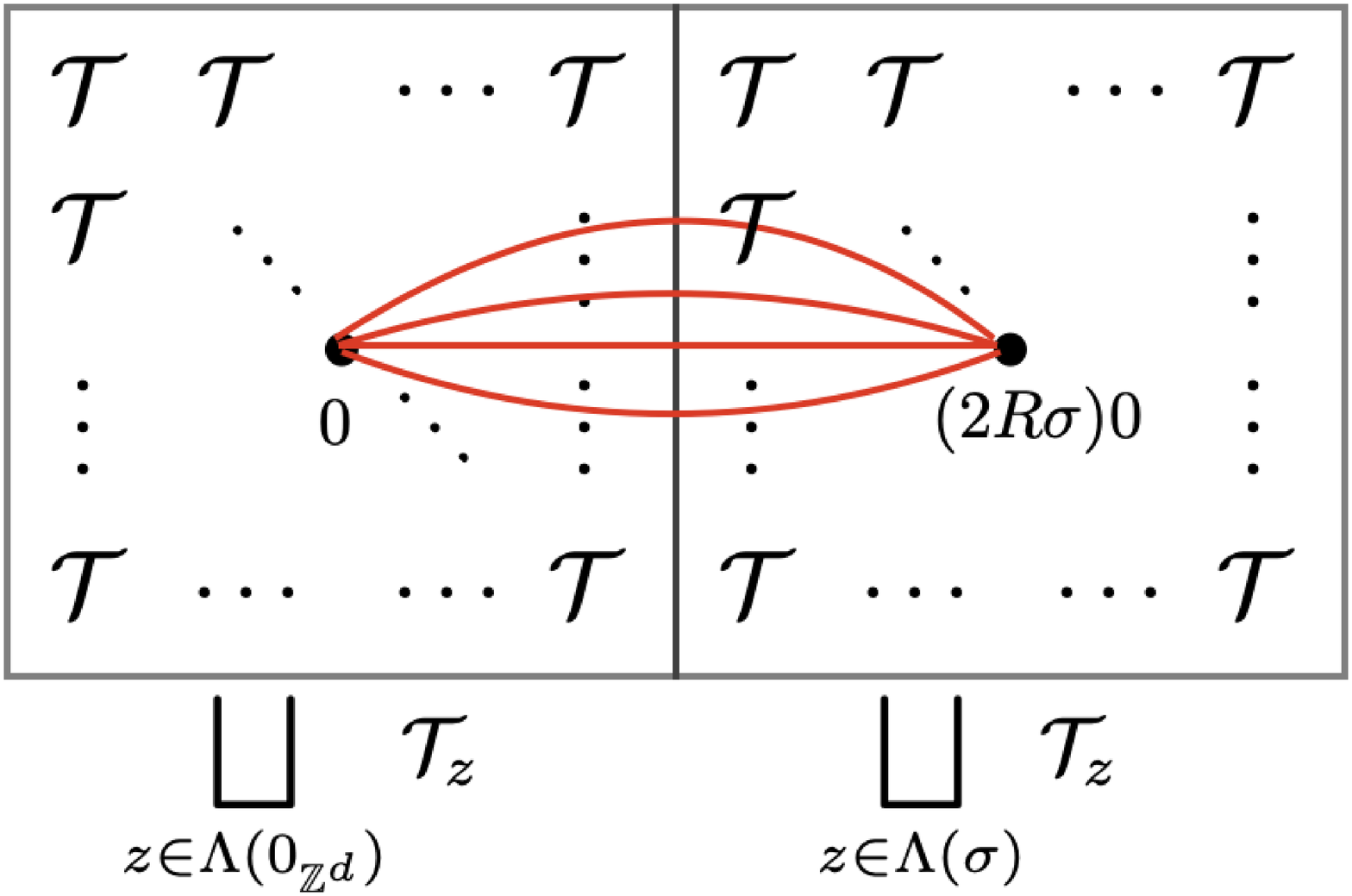}
 \caption{The $l_X$ paths $\gamma^{(\sigma)}_1,\ldots,\gamma^{(\sigma)}_{l_X}$ from $0$ to $(2R\sigma)0$ (red).}
\label{fig:box_covering}
\end{figure}
For two actions $\sigma, \tau$ with $\sigma \sim \tau$, let $T(\sigma, \tau)$ be the minimum passage time of the $l_X$ paths $(2R\sigma)\gamma^{(\sigma^{-1}\tau)}_1,\ldots, (2R\sigma)\gamma^{(\sigma^{-1}\tau)}_{l_X}$, that connect $(2R\sigma)0$ and $(2R\tau)0$. For a self-avoiding path $\pi = (0_{\Z^d}=\sigma_0, \sigma_1, \ldots, \sigma_m=\sigma)$ of length $m$ in $\Z^d$, we have
\begin{equation}
\label{eq:subadditive_sum}
	T(0, (2R\sigma)0) \leq \sum_{i=0}^{m-1} T(\sigma_{i}, \sigma_{i+1}). 
\end{equation}
We let $T_i := T(\sigma_{i}, \sigma_{i+1})$ and denote by $T(\pi)$ the right hand side of \eqref{eq:subadditive_sum}. By Lemma~\ref{lem:finite_moment} and the assumption \eqref{eq:assumption_of_shape_theorem}, each $T_i$ has a finite $d$th moment. Since we assume $d\geq 2$, the variance $\V(T_i)$ of $T_i$ is finite. We set
\begin{align}
	\V_{\max} := \max_{0_{\Z^d} \sim \tau}  \V(T(0_{\Z^d}, \tau))
\end{align}
and
\begin{align}
	\mE_{\max} := \max_{0_{\Z^d} \sim \tau} \mE T(0_{\Z^d}, \tau). 
\end{align}
Since $T_i$ and $T_j$ are independent whenever $|i-j|>1$, the variance $\V(T(\pi)) $ of $T(\pi)$ is equal to
\begin{align*}
	\V(T(\pi)) = \sum_{i=0}^{m-1} \mE (T_i - \mE T_i)^2 + 2\sum_{i=0}^{m-2} \mE (T_i - \mE T_i)(T_{i+1} - \mE T_{i+1}).  
\end{align*}
This variance is bounded above by $3m\V_{\max}$. Indeed, it follows from the Cauchy-Schwartz inequality that
\begin{align*}
	\mE |(T_i - \mE T_i)(T_{i+1} - \mE T_{i+1})| \leq 
\V(T_i)^{1/2}\V(T_{i+1})^{1/2} \leq \V_{\max}.   
\end{align*}
Chebyshev's inequality implies that
\begin{align}
\label{eq:variance_upperbound}
	\mP\left(T(\pi) \geq \sum_{i=0}^{m-1}(\mE_{\max} + 1)\right)
\leq &\mP\left(\sum_{i=0}^{m-1} (T_i - \mE T_i ) \geq m\right)\notag\\
\leq &\frac{1}{m^2}\V(T(\pi))\\
\leq &3\V_{\max}/m. 
\end{align}
To prove \eqref{eq:sum_all_vertices}, we need to improve this estimate. For each $\sigma \in \Z^d\setminus \{0_{\Z^d}\}$, we can take $2d$ paths $\pi_1(\sigma), \pi_2(\sigma), \ldots, \pi_{2d}(\sigma)$ in $\Z^d$ that connect $0_{\Z^d}$ and $\sigma$ and satisfy the following conditions:
\begin{itemize}
	\item they are vertex-disjoint except for $0_{\Z^d}$ and $\sigma$; and 
\item the length of each path is less than $|\sigma| + K_d$.  
\end{itemize}
Here, the constant $K_d$, depending on $d$, is the cost for making a detour in order that these paths do not overlap. Fix $\sigma \in \Z^d \setminus \{0_{\Z^d}\}$ and let $\pi_1, \ldots, \pi_{2d}$ be paths satisfying the above conditions. We consider the separation $\pi_j = \pi_j^1+ \pi_j^2 +  \pi_j^3$ of each path $\pi_j$, where $\pi_j^1$ (resp. $\pi_j^3$) is the first (resp. last) step of $\pi_j$. Let
\begin{align*}
	U := \max_{j=1,2,\ldots,2d} T(\pi_j^1) \text{ and }  U^{\prime} := \max_{j=1,2,\ldots,2d} T(\pi_j^3). 
\end{align*} 
Then we obtain $T(0, (2R\sigma)0) \leq U + \min_{j=1,2,\ldots, 2d} T(\pi_j^2) + U^{\prime}$, and 
\begin{align*}
 &\mP\big(T(0, (2R\sigma)0) \geq (\mE_{\max} + 1)(|\sigma|+K_d) + 2|\sigma|\big) \\
\leq &\mP(U \geq |\sigma|) + \mP(U^{\prime} \geq |\sigma|) + \mP\big(\min_{j=1,2,\ldots, 2d} T(\pi_j^2) \geq (\mE_{\max} + 1)(|\sigma|+K_d)\big). 
\end{align*}
The first and second terms of the right hand side are bounded above by
\begin{align*}
	2d \max_{0_{\Z^d} \sim \tau}\mP(T(0_{\Z^d}, \tau) \geq |\sigma|).  
\end{align*}
This is summable in $\sigma \in \Z^d$ since $T(0_{\Z^d}, \tau)$ has a finite $d$th moment. The independence of $T(\pi_j^2) \, (j = 1,2, \ldots, 2d)$ implies that the last term is equal to 
\begin{align*}
	\prod_{j=1}^{2d} \mP\big(T(\pi_j^2) \geq (\mE_{\max} + 1)(|\sigma|+K_d)\big).  
\end{align*}
Since the length of each path $\pi_j^2$ is less than $|\sigma|+K_d$, it follows from \eqref{eq:variance_upperbound} that this is bounded above by
\begin{equation*}
	3^{2d}\left(\frac{\V_{\max}}{|\sigma|+K_d}\right)^{2d},  
\end{equation*} 
which is summable in $\sigma \in \Z^d$. Now we have proved that 
\begin{equation}
\label{eq:Rsigma_finite}
	\sum_{\sigma \in \Z^d} \mP(T(0, (2R\sigma)0) \geq C_1 |\sigma|) < \infty, 
\end{equation}
where $C_1$ is a constant satisfying $C_1 |\sigma| \geq (\mE_{\max} + 1)(|\sigma|+K_d) + 2 |\sigma|$ for all $\sigma \neq 0_{\Z^d}$. \par
Finally, we consider the passage time for all vertices that do not necessarily coincide with $(2R\sigma) 0$.  Since the number of vertices in each box $\bigsqcup_{z \in \Lambda(\sigma)} \mT_z$ is finite, it follows from Lemma~\ref{lem:finite_moment}  that the random variable
\begin{align*}
	S(\sigma):= \max \left\{T((2R\sigma)0, x)\,:\, x \in \bigsqcup_{z \in \Lambda(\sigma)} \mT_z \right\}
\end{align*} 
has a finite $d$th moment. For any vertex $x \in X$, the first passage time $T(0, x)$ is bounded above by
\begin{equation*}
	T(0, (2R\sigma)0) + S(\sigma),
\end{equation*}
where $\sigma = \sigma_x \in \Z^d$ is the action satisfying $x \in \bigsqcup_{z \in \Lambda(\sigma)}\mathcal T_z$. We take a constant $C_2$ such that $C_2\|x\|_1 \geq |\sigma_x|$ holds for any $x \in X$ and set $C := 2C_1C_2$. Then we obtain
\begin{align*}
	&\sum_{x \in X} \mP(T(0, x)\geq C\|x\|_1) \\
\leq & \sum_{\sigma \in \Z^d} \sum_{x \in \bigsqcup_{z \in \Lambda(\sigma)}\mT_{z}} \left[ \mP(T(0, (2R\sigma)0)\geq C_1|\sigma|) +\mP\left(S(\sigma) \geq C_1|\sigma|\right) \right] \\
= & \left|\bigsqcup_{z \in \Lambda(\sigma)}\mT_{z}\right| \sum_{\sigma \in \Z^d} \left[ \mP(T(0, (2R\sigma)0)\geq C_1|\sigma|) + \mP\left(S(\sigma) \geq C_1|\sigma|\right) \right] \\
= & \left|\bigsqcup_{z \in \Lambda(\sigma)} \mT_z \right|\left[\sum_{\sigma \in \Z^d} \mP(T(0, (2R\sigma)0)\geq C_1|\sigma|) + \sum_{\sigma \in \Z^d} \mP\left(S(0_{\Z^d}) \geq C_1|\sigma|\right)\right],
\end{align*}
where $\left|\bigsqcup_{z \in \Lambda(\sigma)}\mT_{z}\right|$ denotes the number of vertices in $\bigsqcup_{z \in \Lambda(\sigma)}\mT_z$. In the last equality, we use the fact that the distribution of $S(\sigma)$ does not depend on $\sigma$. In the last expression, it follows from \eqref{eq:Rsigma_finite} that the first summand is finite. The finiteness of the second summand follows from the fact that $S(0_{\Z^d})$ has a finite $d$th moment. This completes the proof of \eqref{eq:sum_all_vertices}. 
\end{proof}

\begin{bibdiv}
\begin{biblist}

\bib{Simulation}{article}{
   author={Alm, Sven Erick},
   author={Deijfen, Maria},
   title={First passage percolation on $\Z^2$: a simulation study},
   journal={J. Stat. Phys.},
   volume={161},
   date={2015},
   number={3},
   pages={657--678},
   issn={0022-4715},
   review={\MR{3406703}},
   doi={10.1007/s10955-015-1356-0},
}

\bib{FPP_history}{book}{
   author={Auffinger, Antonio},
   author={Damron, Michael},
   author={Hanson, Jack},
   title={50 years of first-passage percolation},
   series={University Lecture Series},
   volume={68},
   publisher={American Mathematical Society, Providence, RI},
   date={2017},
   pages={v+161},
   isbn={978-1-4704-4183-8},
   review={\MR{3729447}},
}

\bib{Campanino}{article}{
   author={Campanino, M.},
   author={Russo, L.},
   title={An upper bound on the critical percolation probability for the
   three-dimensional cubic lattice},
   journal={Ann. Probab.},
   volume={13},
   date={1985},
   number={2},
   pages={478--491},
   issn={0091-1798},
   review={\MR{781418}},
}

\bib{Cox_Durrett}{article}{
   author={Cox, J. Theodore},
   author={Durrett, Richard},
   title={Some limit theorems for percolation processes with necessary and
   sufficient conditions},
   journal={Ann. Probab.},
   volume={9},
   date={1981},
   number={4},
   pages={583--603},
   issn={0091-1798},
   review={\MR{624685}},
}

\bib{1_dFPP}{article}{
   author = {Daniel, Ahlberg},
   title={Asymptotic of first-passage percolation on $1$-dimensional graphs},
   date={2016},
   Eprint = {arXiv:1107.2276}
}

\bib{Graph_theory}{book}{
   author={Diestel, Reinhard},
   title={Graph theory},
   series={Graduate Texts in Mathematics},
   volume={173},
   edition={5},
   note={Paperback edition of [ MR3644391]},
   publisher={Springer, Berlin},
   date={2018},
   pages={xviii+428},
   isbn={978-3-662-57560-4},
   isbn={978-3-662-53621-6},
   review={\MR{3822066}},
}

\bib{FKG}{article}{
   author={Fortuin, C. M.},
   author={Kasteleyn, P. W.},
   author={Ginibre, J.},
   title={Correlation inequalities on some partially ordered sets},
   journal={Comm. Math. Phys.},
   volume={22},
   date={1971},
   pages={89--103},
   issn={0010-3616},
   review={\MR{309498}},
}

\bib{Multidimensional_lattice}{article}{
   author={Grimmett, G. R.},
   title={Multidimensional lattices and their partition functions},
   journal={Quart. J. Math. Oxford Ser. (2)},
   volume={29},
   date={1978},
   number={114},
   pages={141--157},
   issn={0033-5606},
   review={\MR{489612}},
   doi={10.1093/qmath/29.2.141},
}

\bib{Grimmett}{book}{
   author={Grimmett, Geoffrey},
   title={Percolation},
   series={Grundlehren der Mathematischen Wissenschaften [Fundamental
   Principles of Mathematical Sciences]},
   volume={321},
   edition={2},
   publisher={Springer-Verlag, Berlin},
   date={1999},
   pages={xiv+444},
   isbn={3-540-64902-6},
   doi={10.1007/978-3-662-03981-6},
}

\bib{beyond_Z^d}{article}{
   author={H\"{a}ggstr\"{o}m, Olle},
   title={Percolation beyond $\Z^d$: the contributions of Oded Schramm},
   journal={Ann. Probab.},
   volume={39},
   date={2011},
   number={5},
   pages={1668--1701},
   issn={0091-1798},
   review={\MR{2884871}},
   doi={10.1214/10-AOP563},
}

\bib{Hammersley}{article}{
   author={Hammersley, J. M.},
   author={Welsh, D. J. A.},
   title={First-passage percolation, subadditive processes, stochastic
   networks, and generalized renewal theory},
   conference={
      title={Proc. Internat. Res. Semin., Statist. Lab., Univ. California,
      Berkeley, Calif.},
   },
   book={
      publisher={Springer-Verlag, New York},
   },
   date={1965},
   pages={61--110},
   review={\MR{0198576}},
}

\bib{Aspects}{article}{
   author={Kesten, Harry},
   title={Aspects of first passage percolation},
   conference={
      title={\'{E}cole d'\'{e}t\'{e} de probabilit\'{e}s de Saint-Flour, XIV---1984},
   },
   book={
      series={Lecture Notes in Math.},
      volume={1180},
      publisher={Springer, Berlin},
   },
   date={1986},
   pages={125--264},
   review={\MR{876084}},
   doi={10.1007/BFb0074919},
}

\bib{Kesten_textbook}{book}{
   author={Kesten, Harry},
   title={Percolation theory for mathematicians},
   series={Progress in Probability and Statistics},
   volume={2},
   publisher={Birkh\"{a}user, Boston, Mass.},
   date={1982},
   pages={iv+423},
   isbn={3-7643-3107-0},
   review={\MR{692943}},
}

\bib{Standard_Realizations}{article}{
   author={Kotani, Motoko},
   author={Sunada, Toshikazu},
   title={Standard realizations of crystal lattices via harmonic maps},
   journal={Trans. Amer. Math. Soc.},
   volume={353},
   date={2001},
   number={1},
   pages={1--20},
   issn={0002-9947},
   review={\MR{1783793}},
   doi={10.1090/S0002-9947-00-02632-5},
}

\bib{subadditive_ergodic}{article}{
   author={Liggett, Thomas M.},
   title={An improved subadditive ergodic theorem},
   journal={Ann. Probab.},
   volume={13},
   date={1985},
   number={4},
   pages={1279--1285},
   issn={0091-1798},
   review={\MR{806224}},
}

\bib{Strict_monotonicity}{article}{
   author={Martineau, S\'{e}bastien},
   author={Severo, Franco},
   title={Strict monotonicity of percolation thresholds under covering maps},
   journal={Ann. Probab.},
   volume={47},
   date={2019},
   number={6},
   pages={4116--4136},
   issn={0091-1798},
   review={\MR{4038050}},
   doi={10.1214/19-aop1355},
}

\bib{FPPonTriangular2019}{article}{
   author={Michael, Damron},
   author={Jack, Hanson},
   author={Wai-Kit, Lam},
   title={Universality of the time constant for $2D$ critical first-passage percolation},
   date = {2019},
   Eprint = {arxiv:1904.12009}
}

\bib{Topological_Crystallography}{book}{
   author={Sunada, Toshikazu},
   title={Topological crystallography},
   series={Surveys and Tutorials in the Applied Mathematical Sciences},
   volume={6},
   note={With a view towards discrete geometric analysis},
   publisher={Springer, Tokyo},
   date={2013},
   pages={xii+229},
   isbn={978-4-431-54176-9},
   isbn={978-4-431-54177-6},
   review={\MR{3014418}},
   doi={10.1007/978-4-431-54177-6},
}

\bib{FPPonTriangular2016}{article}{
   author={Yao, Chang-Long},
   title={Limit theorems for critical first-passage percolation on the
   triangular lattice},
   journal={Stochastic Process. Appl.},
   volume={128},
   date={2018},
   number={2},
   pages={445--460},
   issn={0304-4149},
   review={\MR{3739504}},
   doi={10.1016/j.spa.2017.05.002},
}

\end{biblist}
\end{bibdiv}

\end{document}